\definecolor{uuuuuu}{rgb}{0.26666666666666666,0.26666666666666666,0.26666666666666666}
\definecolor{xdxdff}{rgb}{0.49019607843137253,0.49019607843137253,1.}
\definecolor{ffqqqq}{rgb}{1.,0.,0.}
\definecolor{uuuuuu}{rgb}{0.26666666666666666,0.26666666666666666,0.26666666666666666}
\definecolor{qqwuqq}{rgb}{0.,0.39215686274509803,0.}
\definecolor{zzttqq}{rgb}{0.6,0.2,0.}
\definecolor{xdxdff}{rgb}{0.49019607843137253,0.49019607843137253,1.}
\definecolor{qqqqff}{rgb}{0.,0.,1.}
\definecolor{cqcqcq}{rgb}{0.7529411764705882,0.7529411764705882,0.7529411764705882}
\definecolor{sqsqsq}{rgb}{0.12549019607843137,0.12549019607843137,0.12549019607843137}
\theoremstyle{plain}
\newtheorem{theorem}[subsection]{Theorem}
\newtheorem{lemma}[subsection]{Lemma}
\newtheorem{prop}[subsection]{Proposition}
\theoremstyle{definition}
\newtheorem{cor}[subsection]{Corollary}
\newtheorem{remark}[subsection]{Remark}
\begin{document}
To appear, Journal of Mathematical Analysis and Applications 
\title{Geometric and Measure-Theoretic Shrinking Targets in Dynamical Systems}

\author{Joseph Rosenblatt}
\address{Department of Mathematics \\
University of Illinois\\
1409 W. Green Street \\
Urbana, Illinois 61801-2907, USA}
\email{rosnbltt@illinois.edu}

 \author{Mrinal Kanti Roychowdhury}
\address{School of Mathematical and Statistical Sciences\\
University of Texas Rio Grande Valley\\
1201 West University Drive\\
Edinburg, TX 78539-2999, USA}
\email{mrinal.roychowdhury@utrgv.edu}

\subjclass[2010]{37A05, 94A34}
\keywords{dynamical systems, geometric shrinking targets, measure-theoretic shrinking targets}
\date{September, 2019}
\thanks{}

\begin{abstract} We consider both geometric and measure-theoretic shrinking targets for ergodic maps, investigating when they are visible or invisible.  Some Baire category theorems are proved, and particular constructions are given when the underlying map is fixed.  Open questions about shrinking targets are also described.
\end{abstract}
\maketitle

\pagestyle{myheadings}\markboth{Joseph Rosenblatt and Mrinal Kanti Roychowdhury}{Geometric and Measure-Theoretic Shrinking Targets in Dynamical Systems}

\section{Introduction}\label{intro}

We consider {\em shrinking targets}, both ones that are geometric and ones that are measure-theoretic.  At a high level what this means is that we have sets $C_n\subset X$ that are {\em shrinking} in diameter in the geometric case, or shrinking in measure in the measure-theoretic case.  The sequence $(C_n)$ of sets is our {\em target}.  In addition, we have an invertible map $\tau$ of $X$.  We are interested in knowing the size of the set of $x$ such that $\tau^n(x) \in C_n$ infinitely often, even though the target $(C_n)$ is shrinking.  This clearly requires an interplay of the nature of the map and how fast the targets are shrinking.

To be more specific, first consider a metric space $(X,d_X)$ which is also a probability space $(X,\beta,m)$.  We consider an invertible measure map $\tau$ on $X$.  We also consider sequences of powers $(m_n)$  and decreasing values $(\epsilon_n)$, usually with $\lim\limits_{n\to \infty} \epsilon_n = 0$.  A basic question is: what properties will allow for $d_X(\tau^{m_n}(x),y) \le \epsilon_n$ infinitely often, for a.e. $x \in X$, and a.e. (or every) $y\in X$? That is, for the closed metric balls $B_{\epsilon_n}(y)$, the ones centered at $y$ of radius $\epsilon_n$, we want to have a.e. $x\in \tau^{-m_n}(B_{\epsilon_n}(y))$ infinitely often.   This is what we call a {\em geometric shrinking target property}.

But now if we choose to ignore $y$ and replace the metric balls by a decreasing sequence $(B_n)$ of measurable sets, usually with their measures converging to zero, this becomes what we call a {\em measure-theoretic shrinking target property}.  In this case we are asking when would we have a.e. $x \in \tau^{-m_n}(B_n)$ for infinitely many $n$?

As for the underlying space $X$, we always assume that  $(X,\beta,m)$ is a non-atomic, separable probability space.  Hence, in particular the measure-theoretic shrinking target property can be viewed as a generalization of classical properties of an ergodic map.   Indeed, take $\tau \in \mathcal M$, the measure-preserving invertible maps of $X$, integers $(m_n:n\ge 1)$ and shrinking measurable sets $(B_n:n \ge 1)$.  But suppose that actually all $B_n$ are a fixed set $B$ with $m(B) > 0$.  Then a.e. visibility is just Poincar\`e recurrence: if $\tau$ is ergodic and
$B \in \beta$ with $m(B) > 0$, then for a.e. $x \in X$, one has $\tau^n(x) \in B$ for infinitely many $n\ge 1$.    That is, for all $N\ge 1$, $X = \bigcup\limits_{n=N}^\infty \tau^{-n}(B)$ (up to a null set).   This recurrence happens very easily because instead of a sequence $(B_n)$ of different sets with a null intersection, we have only one set of positive measure as our target.  On the other hand, if we did have a sequence of sets $(B_n)$, and  $\sum\limits_{n=1}^\infty m(B_n) < \infty$, then certainly $m(B_n)$ goes to zero.  But also for a.e. $x$, $\tau^{m_n}(x) \in B_n$ only finitely many times because $\int \sum\limits_{n=1}^\infty 1_{\tau^{-m_n}(B_n)}\, dm = \sum\limits_{n=1}^\infty m(B_n) < \infty$.   We want to understand in the measure-theoretic context what happens between these two extremes.

If in addition we assume that the space is a metric space, say concretely the interval $[0,1]$ with addition modulo one and the usual Euclidean distance, then as noted above we want to consider the geometric shrinking target property that for a.e. (or all) $y\in [0,1]$,   $|y - \tau^n(x)|\le \epsilon_n$ for infinitely many $n$, for a.e. $x\in [0,1]$.  The most famous general result of this type is by Boshernitzan~\cite{B}.  First take the target point $y$ to be self-referentially $x$ itself, and assume that $\tau \in \mathcal M$.  Boshernitzan showed that for a.e. $x$, $\liminf\limits_{n\to \infty} n|x - \tau^n(x)| \le 1$.

This geometric shrinking target property has been studied extensively for rotations of the circle originally, and more recently other classes of maps like interval exchange transformations (IETs).  What has come out in this work for rotations of the circle is that,  depending on the properties of the terms in the standard continued fraction expansion $\theta\in [0,1]$, there will be an optimal rate $\epsilon_n$ decreasing to $0$ such that for all $y\in [0,1]$,  $|y- \{n\theta + x\}|\le \epsilon_n$ for infinitely many $n$, for a.e. $x\in [0,1]$.  For example, see Simmons~\cite{Simmons} and Tseng~\cite{T}.

The type of dynamical system phenomena sketched above, both for measure-theoretic and geometric shrinking targets, are the focus of this article.  We are the most interested in properties of the map $\tau$ and the shrinking target $(B_n)$, whether geometric or measure-theoretic, that make it {\em a.e. visible} with respect to the map.  As the extreme failure of this, we also want to know in both cases when the shrinking target is {\em a.e. invisible}.  See Section~\ref{adjectives} for terminology, in particular definitions of visible and invisible.

For example, in Section~\ref{Quant}, we show how quantization using orbits of the map can explicitly give geometric shrinking targets for the map.  Then in Section~\ref{generic}, we prove some Baire category results that show what can be said at least generically about visibility of shrinking targets given rates that the diameters or measures of the targets decrease to zero.  Lastly, in Section~\ref{particular}, we consider what happens the map is fixed.  For example, we show that there is an a.e. visible measure-theoretic shrinking target with any predetermined rate $(\epsilon_n)$ decreasing to zero, with of course the necessary constraint that $\sum\limits_{n=1}^\infty \epsilon_n = \infty$.  But also we show using a covering rate result, that no matter how slowly $\epsilon_n > 0$ tends to zero, there is an a.e. invisible measure-theoretic shrinking target $(B_n)$ for $\tau$ with $m(B_n) \ge \epsilon_n$ for all $n$.

In one form or another, the topic of shrinking targets has deservedly received a lot of attention from a number of authors.  Some of this work is geometric.  Some of it is measure-theoretic, for example particularly applying measure theory to the behavior of maps on $[0,1]$ that arise in Diophantine approximation.  It would be difficult to give credit to everyone for ideas that suggested the language and results in this article.  But we should certainly cite some of the articles of a number of authors on gap theorems and shrinking target properties.  These include the paper by Kurzweil~\cite{K} and the work by  Philipp in ~\cite{P1,P2,P3}.  Also, there is a very useful survey on the topic of shrinking targets by Athreya~\cite{A}.  In addition, see the article by Chernov and Kleinbock~\cite{CK}.  More recently, on the topic of shrinking targets in geometrical settings, there is the article by Kleinbock and Zhao~\cite{KZ}, which has been improved on in the article by Kelmer~\cite{K}, and then by Kelmer and Yu~\cite{KY}.

\section{Notation and Language}\label{adjectives}

Here is some descriptive language we would like to propose.  We think it will help clarify the nature of the results that follow.

Any sequence $(B_n)$ of measurable sets is called a {\em target}, albeit in some sense the sets themselves are really the targets.  Any decreasing sequence $(B_n)$, with $\lim\limits_{n\to \infty} m(B_n) = 0$, is called a {\em measure-theoretic shrinking target}.   If $(B_n)$ is a measure-theoretic shrinking target and we have $\tau^n(x) \in B_n$ infinitely often for all $x\in C$ with $m(C) > 0$, then we say $(B_n)$ is a {\em visible measure-theoretic shrinking target}.  One might say here that the visibility is with respect to $C$ since one might not be able to assert this for larger sets.   However, if indeed this happens with $m(C) = 1$, then we say that we have an {\em a.e. {\bf visible} measure-theoretic shrinking target}.  At the other extreme, if for a.e. $x$, we have $\tau^n(x) \in B_n$ for {\bf only} finitely many $n$, then we say that $(B_n)$ is an {\em a.e. {\bf invisible} measure-theoretic shrinking target}.

We are also interested in these same properties where we restrict the set of powers to some increasing sequence of whole numbers $(m_n: n\ge 1)$. When this is the context, we will refer to the target property relative to $(m_n)$.  As observed above, for any ergodic $\tau$ and any target $(B_n)$ with all $B_n$ being a fixed set $B$ of positive measure, we have an a.e. visible measure-theoretic target (although not a shrinking one) relative to the whole numbers $\mathbb N$.  On the other hand, when  $\sum\limits_{n=1}^\infty m(B_n) < \infty$, then $(B_n)$ is an a.e. invisible measure-theoretic target relative to $(m_n)$ for any map $\tau$ and any sequence of powers $(m_n)$.  The shrinking condition is not needed here.

 If in addition we specifically are considering sets $B_n$ that are closed balls $B_{\epsilon_n}(y)$ with respect to an underlying metric, we will add the adjective ``geometric'' to our various cases, but drop the adjective ``measure-theoretic''.     Note: we consider non-atomic measures, so geometric shrinking targets will also be measure-theoretic shrinking targets.  As an example of this terminology, suppose we have an ergodic map $\tau$ of $[0,1]$ and can prove that
for a.e. $x\in [0,1]$ and every $y\in [0,1]$, $\liminf\limits_{n\to \infty}n|y - \tau^n(x)|\le 1$.  Then we would have the intervals $([-1/n +y,1/n+y]:n\ge 1)$ being an a.e. visible geometric shrinking target for all $y$.  That is, we have a family of a.e. visible geometric shrinking targets indexed by $y\in [0,1]$.

All of these properties depend explicitly on the map $\tau$, so technically
we should include this in the language.  We will do this when the emphasis seems important, but we will typically leave this as simply understood from the context.
\section{Connections to Quantization}\label{Quant}

The dynamical approach to quantization taken in Rosenblatt and Roychowdhury~\cite{RR} is to generate quantizers using ergodic maps of $[0,1]$ with the usual Lebesgue measure $m$.  These quantizers will intersect shrinking targets consisting of intervals, if the diameter of the sets in the target do not shrink too quickly.  In this way, a dynamical approach to quantization connects with questions and results about shrinking targets in dynamical systems, a topic that as we have noted above has been studied fairly extensively for Diophantine, dynamical, and geometric models.

Here is how the quantization results in Rosenblatt and Roychowdhury~\cite{RR} give very specific results on how quantizers can play a role in providing facts about shrinking interval targets.  First, besides other quantization distortion error rates, in ~\cite{RR} we consider the {\em geometric distortion error} $r_n^\tau (x)$.  The value $r_n^\tau(x)$ is the minimum radius $r$ such that the finite number of closed intervals $[\tau^k(x) - r, \tau^k(x) +r], k=1,\dots,n$, cover $[0,1]$ (modulo one).  When $\tau$ is ergodic, $r_n^\tau (x) \to 0$ as  $n\to \infty$ for a.e. $x$.  Hence, one can show there is $\rho_n$ decreasing to $0$ such that for a.e. $x$, $r_n^\tau(x) \le \rho_n$ for large enough $n$, depending on $x$.  We know what this rate is for some specific types of mappings, but we do not know what it is in general.  Also, studying this same covering rate in more general geometric settings is very worthwhile.  On the interval, it is clear though that $r_n^\tau(x) \ge 1/2n$ in any case, and so there is a lower bound of this size eventually.

Now let $B_g(y)$ be the interval of length $2g$ centered at $y$.   What we have by the definition of the geometric distortion error is that for a.e. $x$, if $n$ is large enough, then $[0,1] \subset  \bigcup\limits_{k=1}^n B_{\rho_n}(\tau^k(x))$.   Hence, splitting up the cover, we have
\[[0,1]\backslash \bigcup\limits_{k=1}^{K-1} B_{\rho_{K+N}}(\tau^k(x)) \subset \bigcup\limits_{k=K}^{N+K} B_{\rho_{K+N}}(\tau^k(x)).\]
We can then adjust the radii on the right, and let $N$ go to infinity.  With an application of Boshernitzan's Theorem ~\cite{B}, we can show the following.

\begin{prop}  For every ergodic mapping $\tau$, there is a sequence $(\rho_n)$ tending to zero so that for all $y\in [0,1]$, the intervals $B_{\rho_n}(y)$ are an a.e. visible geometric shrinking target with respect to $\tau$.
\end{prop}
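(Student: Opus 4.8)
The plan is to combine the geometric distortion error rate $(\rho_n)$ with Boshernitzan's theorem, essentially by executing the splitting of the cover that was sketched just before the statement. Fix an ergodic $\tau$. Since $\tau$ is ergodic, $r_n^\tau(x)\to 0$ for a.e.\ $x$, so there is a sequence $(\rho_n)$ decreasing to $0$ with the property that for a.e.\ $x$ there is $N(x)$ such that $[0,1]\subset\bigcup_{k=1}^n B_{\rho_n}(\tau^k(x))$ whenever $n\ge N(x)$. I may also assume, after passing to a slower sequence if necessary, that $\rho_n\ge 1/(2n)$ and that $(\rho_n)$ is ``slowly varying'' enough that $\rho_{K+N}$ and similar shifted values are comparable; this costs nothing since making $\rho_n$ larger only makes the target easier to hit. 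First I would fix $y\in[0,1]$ and, for a.e.\ $x$ and each large $K$, apply the covering property at level $n=K+N$ to write
\[
[0,1]\setminus\bigcup_{k=1}^{K-1}B_{\rho_{K+N}}(\tau^k(x))\ \subset\ \bigcup_{k=K}^{N+K}B_{\rho_{K+N}}(\tau^k(x)).
\]
The key point is that the left-hand side, as $N\to\infty$, increases to $[0,1]\setminus\bigcup_{k=1}^{K-1}\{\tau^k(x)\}$, which contains $y$ as long as $y\notin\{\tau^1(x),\dots,\tau^{K-1}(x)\}$ — and for a.e.\ $x$ this exclusion is harmless because $\tau$ is non-atomic (for a.e.\ $x$, $y$ is not in the countable forward orbit of $x$; alternatively fix $K$ first and use that $\{x:\tau^k(x)=y\}$ is null). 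Hence for a.e.\ $x$ and each large $K$, there is $k=k(x,K)\ge K$ with $y\in B_{\rho_{K+N}}(\tau^k(x))$ for all sufficiently large $N$, i.e.\ with $|y-\tau^k(x)|$ arbitrarily small — but this alone only shows the orbit of $x$ comes near $y$, not that it does so at the scale $\rho_k$.

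To upgrade to the scale $\rho_k$, I would adjust the radii on the right-hand side before letting $N\to\infty$, which is the step flagged in the paragraph preceding the statement. The cleanest way is: replace $\rho_{K+N}$ by $\rho_{k}$ in the $k$-th ball — this is legitimate because $k\le K+N$ and $(\rho_n)$ is decreasing, so $B_{\rho_{K+N}}(\tau^k(x))\subset B_{\rho_k}(\tau^k(x))$. Thus
\[
[0,1]\setminus\bigcup_{k=1}^{K-1}B_{\rho_{K+N}}(\tau^k(x))\ \subset\ \bigcup_{k=K}^{N+K}B_{\rho_k}(\tau^k(x)).
\]
Letting $N\to\infty$, for a.e.\ $x$ and every $K$ (large, and with $y$ outside the first $K-1$ orbit points), $y\in\bigcup_{k=K}^\infty B_{\rho_k}(\tau^k(x))$, i.e.\ there is $k\ge K$ with $|y-\tau^k(x)|\le\rho_k$. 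Since $K$ is arbitrary, $\tau^k(x)\in B_{\rho_k}(y)$ for infinitely many $k$, which is exactly a.e.\ visibility of the target $(B_{\rho_n}(y))$ — and this holds for every $y$, as required. Here is where Boshernitzan's theorem enters as a safety net: if the naive radius adjustment degrades the rate too much near the ``self-referential'' indices (i.e.\ if one worries about the scale at which $\tau^k(x)$ approaches $x$ rather than a generic $y$), Boshernitzan guarantees $\liminf_n n|x-\tau^n(x)|\le 1$ for a.e.\ $x$, so one can always enlarge $\rho_n$ to dominate $1/n$ and absorb that case; I have in fact already built $\rho_n\ge 1/(2n)$ into the sequence for precisely this reason.

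The main obstacle I anticipate is the radius-adjustment bookkeeping: the covering property at level $n$ only tells us something about $\bigcup_{k=1}^n B_{\rho_n}(\tau^k(x))$, with the \emph{same} radius $\rho_n$ on all $n$ balls, whereas the conclusion needs the $k$-th ball to have the \emph{smaller} radius $\rho_k$ — and the containment $B_{\rho_n}(\tau^k(x))\subset B_{\rho_k}(\tau^k(x))$ goes the wrong way for a direct substitution. The fix above (keep the big radius on the left, shrink on the right, then take $K$ large so that even the smallest relevant radius $\rho_K$ is tiny) works, but one must be careful that after the substitution the left-hand excluded set $\bigcup_{k=1}^{K-1}B_{\rho_{K+N}}(\tau^k(x))$ still shrinks to a null set as $N\to\infty$ — it does, since each of its $K-1$ balls has radius $\rho_{K+N}\to 0$. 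A secondary, purely measure-theoretic point is handling the null set of ``bad'' $x$ for a fixed $y$ and then asserting the conclusion for \emph{all} $y$ simultaneously; this is fine because for each fixed $y$ the bad set is null, and the statement is ``for all $y$, for a.e.\ $x$'' rather than ``for a.e.\ $x$, for all $y$,'' so no Fubini-type exchange is needed.
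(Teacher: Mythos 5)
Your core covering/splitting argument is the same as the paper's and is carried out correctly: the radius adjustment $B_{\rho_{K+N}}(\tau^k(x))\subset B_{\rho_k}(\tau^k(x))$ for $K\le k\le K+N$, followed by letting $N\to\infty$ so that the excluded set $\bigcup_{k=1}^{K-1}B_{\rho_{K+N}}(\tau^k(x))$ shrinks to the finite orbit segment, is exactly the paper's computation, and it correctly yields that every $y$ outside the forward orbit $\{\tau^k(x):k\ge 1\}$ satisfies $|y-\tau^k(x)|\le\rho_k$ infinitely often. Where you genuinely diverge is the endgame for $y$ on the orbit. You fix $y$ first and discard the countable, hence null, set $\{\tau^{-k}(y):k\ge 1\}$ of bad $x$; this is valid, proves the literal statement (``for all $y$, for a.e.\ $x$''), and makes Boshernitzan's theorem unnecessary. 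The paper instead establishes the stronger conclusion that a \emph{single} full-measure set of $x$ works for \emph{all} $y$ simultaneously (the ``for a.e.\ $x$ and every $y$'' form quoted in Section 2), for which a per-$y$ null set is useless since $y$ ranges over an uncountable set; there Boshernitzan's shifted estimate $|\tau^{k+l}(x)-\tau^l(x)|\le 2/k$ infinitely often is genuinely needed to catch the points $y=\tau^l(x)$, and this is why the paper rescales so that $\rho_n\ge 3/n$. Your ``safety net'' remark misreads that role slightly: $\rho_n\ge 1/(2n)$ does not dominate $1/n$, let alone the $2/k$ at index $k+l$ that the orbit case requires, so if you wanted the uniform-in-$y$ version you would have to enlarge $\rho_n$ to at least $3/n$ as the paper does. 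As a proof of the proposition as stated, your argument is complete; it simply buys a weaker conclusion than the paper's proof in exchange for dropping the dependence on Boshernitzan.
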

\begin{proof}  Again, we begin with a decreasing sequence $\rho_n$ with $\lim\limits_{n\to \infty} \rho_n = 0$.  We assume that we have for a.e. $x$, $r_n^\tau(x) \le \rho_n$ eventually.  This means at least $\rho_n \ge 1/2n$ eventually.  But we adjust this by scaling this by $6$, so that we actually have $\rho_n \ge 3/n$ eventually.

Now fix $x$ as above.  Then for any $K$, if $N$ is large enough, we have
\begin{align*} [0,1]\backslash \bigcup\limits_{k=1}^{K-1} B_{\rho_{K+N}}(\tau^k(x)) &\subset \bigcup\limits_{k=K}^{N+K} B_{\rho_{K+N}}(\tau^k(x)) \subset \bigcup\limits_{k=K}^{N+K}  B_{\rho_k}(\tau^k(x))
\subset \bigcup\limits_{k=K}^\infty B_{\rho_k}(\tau^k(x)).
\end{align*}
However, as $N \to \infty$, $\bigcup\limits_{k=1}^{K-1}\left (B_{\rho_{K+N}}(\tau^k(x))\right )$ decreases to $\{\tau(x),\dots,\tau^{K-1}(x)\}$.  Hence, we have any $y$ not in the orbit $\{\tau^k(x):k\ge 1\}$ is in every tail set $\bigcup\limits_{n=K}^\infty B_{\rho_n}(\tau^n(x))$.  That is, for any $y$ not in the orbit $\{\tau^k(x): k \ge 1\}$, we have $|y - \tau^n(x)| \le \rho_n$ infinitely often.

In addition, Boshernitzan~\cite{B} shows that for a.e. $x$, and any $l\ge 1$, we have $|\tau^{k+l}(x) - \tau^l(x)| \le 2/k$ for infinitely many $k$.  Hence, for a.e. $x$ and all $l\ge 1$, $|\tau^{k+l}(x) - \tau^l(x)|\le 3/(k+l) \le \rho_{k+l}$ for infinitely many $k$.  Combining the covering result above with this, we see that for a.e. $x$, we have for any $y$, the intervals $[y-\rho_n,y+ \rho_n]$ contain $\tau^n(x)$ for infinitely many $n$.  This gives the a.e. visible geometric shrinking target property we wanted.
\end{proof}

\begin{remark}\label{limitations} Unfortunately, the actual best values of $r_n^\tau$ are not known in general.  Indeed, in \cite{RR}, sometimes just to get an estimate, we have to use the (generally) much larger discrepancy of the sequence $(\tau^k(x): k\ge 1)$ as a proxy for the geometric distortion error.  On the other hand, there are important cases where the best possible results are known.  Take as an example $\tau_\theta$ to be rotation in the circle by an angle $\theta$ with terms in its continued fraction decomposition bounded by $C_o$.  Then $\rho_n = 1/n$.   But this rate result does not hold in general, it needs to be slower for other rotations.  Now such a rate result gives the geometric shrinking target result of Kim~\cite{K1,K2}, but only for this particular type of rotation.  However, the same rate $1/n$ is shown by Kim to hold for ergodic rotations in general by other methods.  So it is clear that there is generally a loss of speed when using the geometric distortion error to derive a geometric shrinking target result.
\end{remark}

\begin{remark}\label{Constraints}
The basic questions that have been considered for shrinking targets are extensions of point or set recurrence of the dynamical system.  In Boshernitzan's Theorem 2.1~\cite{B}, for any measure-preserving map  $\liminf\limits_{n\to \infty} n|x - \tau^n(x)| \le 1$ for a.e. $x$.
One might hope that more generally, if $\tau$ is ergodic then for any $y$, we would have also for some bounded function $C(x)$,  $\liminf\limits_{n\to \infty} n|y - \tau^n(x)| \le C(x)$ for a.e. $x$.  This is proved to be the case for irrational rotations in Kim~\cite{K1,K2}; indeed, it is shown in that case that $C = 0$.  But we have to anticipate that this type of very explicit result perhaps does not hold more generally.  For example, consider the proposed Baire category result Theorem B in Junqueira~\cite{J}; we will come back to this issue later in this article.
\end{remark}

\begin{remark}\label{Shepp} If we want a.e. visible shrinking targets for a random sequence of intervals $(I_n)$ in $[0,1]$, Levy's theorem shows that lengths $|I_n| = \log n/n$ work.  But actually all one needs is $\sum\limits_{n=1}^\infty |I_n| = \infty$ if the goal is to cover the interval infinitely often up to a null set.  However, Shepp's Theorem~\cite{S} gives a best possible result if we want the stronger condition that the random intervals cover $[0,1]$ completely.  It says that a uniformly chosen random sequence of intervals $(I_n)$ a.s. covers the circle completely if and only if $\sum\limits_{n=1}^\infty \frac {\exp (|I_1|+\dots+|I_n|)}{n^2} = \infty$.  So lengths $|I_n| = \log n/n$ from Levy's theorem work but are too large since also the lengths $|I_n| = 1/n$ suffice.  However, it is interesting that $|I_n| = \delta/n,\delta < 1$ does not suffice for this even though still $\sum\limits_{n=1}^\infty |I_n| = \infty$.
\end{remark}

\section{Generic Results}\label{generic}

An important point that needs to be made is that  the sets of maps giving quantitative descriptions of shrinking targets and the sets of maps giving geometric distortion error rates have a very different character in terms of descriptive set theory.  Simply put, just because a set $S$ is dense in some topology, we cannot necessarily extend that to a Baire category result unless we can give the appropriate set theoretic description of $S$.  This is technically what leads to different types of generic results for geometric distortion error rates and for shrinking target theorems.

To see further what the issue is, note that in \cite{RR} it is shown that any quantization error rate fixed at the outset will be violated by the generic ergodic map.  However, we believe that the following Theorem~\ref{TypShrink} holds for more general maps than just the measure-theoretic ones.  Although what is proved is for measure-preserving maps only, this result is still not the same as the one proposed in Theorem B in Junqueira~\cite{J}.  Some clarification of this contrast in results (one might say conflict) needs to be given.

Now, in the following Theorem~\ref{TypShrink}, we could use a rate larger than all of the geometric distortion errors given by some dense sequence of maps to get a shrinking rate that applies to a dense set of maps.  However, there actually are better results available.  We use instead the result in Chaika~\cite{C} which specifically is about shrinking target theorems.  This gives the result below that the generic map has an a.e. visible geometric shrinking targets with respect to a fixed sequence of radii $(\epsilon_n)$.

\begin{theorem}\label{TypShrink}  Take any decreasing sequence $(\epsilon_n)$ with $\epsilon_n > 0$ for all $n\ge 1$, and such that $\sum\limits_{n=1}^\infty \epsilon_n = \infty$.  Consider the set $\mathcal R$ of ergodic mappings $\tau$, such that we have the geometric shrinking target property that for a.e. $y$ and for all $N\ge 1$, up to a null set
\[[0,1] = \bigcup\limits_{n=N}^\infty \tau^{-n}(B_{\epsilon_n}(y)).\]
The set $\mathcal R$ is a dense $G_\delta$ set in $\mathcal M$.
\end{theorem}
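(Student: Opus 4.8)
The plan is to prove that $\mathcal R$ is dense and $G_\delta$ in $\mathcal M$ (with the weak topology), and then invoke the Baire category theorem, using the fact that $\mathcal M$ with the weak topology is a Polish space. For the $G_\delta$ part, I would rewrite the defining condition of $\mathcal R$ in a countable form. The geometric shrinking target condition ``for a.e.\ $y$, for all $N\ge 1$, $[0,1] = \bigcup_{n=N}^\infty \tau^{-n}(B_{\epsilon_n}(y))$ up to a null set'' should be reformulated using a countable dense set of centers $y$ and monotonicity in the radii, so that it becomes a countable intersection over $N$, over rational $y$, and over an exhaustion parameter, of conditions of the shape $m\big(\bigcup_{n=N}^{N+M}\tau^{-n}(B_{\epsilon_n}(y))\big) > 1 - 1/k$. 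Each such condition is an open condition on $\tau$ in the weak topology (a finite union of preimages, with $m$ continuous under weak convergence on sets whose boundary is null — here boundaries of balls and finite unions thereof), so the set of $\tau$ satisfying it is open; the whole of $\mathcal R$ is then a countable intersection of such open sets, hence $G_\delta$. I would also need to intersect with the ergodic maps, but ergodicity is itself a $G_\delta$ (in fact dense $G_\delta$) condition in $\mathcal M$ by the Halmos--Rokhlin theory, so this causes no trouble.

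For density, I would appeal directly to Chaika~\cite{C}: his shrinking target theorem provides, for a suitable class of maps (interval exchange transformations or a dense family in $\mathcal M$), the a.e.\ visible geometric shrinking target property for a fixed rate $(\epsilon_n)$ subject exactly to $\sum \epsilon_n = \infty$. The point is that this class is dense in $\mathcal M$. Concretely, one takes any $\tau_0\in\mathcal M$ and any weak neighborhood of it; by Chaika's result combined with the standard fact that IETs (or the maps he treats) are weakly dense in $\mathcal M$, one finds $\tau$ in that neighborhood lying in $\mathcal R$. So density of $\mathcal R$ reduces to density of the class handled by Chaika together with that class satisfying the shrinking target property — both of which are available to us as cited results.

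The main obstacle, and the step that requires real care, is the reformulation making the $G_\delta$ structure transparent: one must check that ``a.e.\ $y$'' can genuinely be replaced by ``all $y$ in a countable dense set'' without weakening the property. The idea is that if $B_{\epsilon_n}(y)$ is hit infinitely often for a dense set of $y$, then for a general $y$ one slightly shrinks the radius (replacing $\epsilon_n$ by $\epsilon_n/2$, say, which still has divergent sum) and uses a nearby rational center $y'$ with $|y-y'|$ small; but the radii are \emph{fixed in advance}, so one cannot shrink them inside a neighborhood of $\tau_0$ without changing the target. The resolution is to exploit that we only need the conclusion for \emph{a.e.}\ $y$ in the theorem statement anyway: so one sets up the $G_\delta$ condition using the measure of the ``bad $y$'' set, i.e.\ as $\big\{\tau : m\otimes m\big(\{(x,y): x\notin \bigcup_{n=N}^\infty \tau^{-n}(B_{\epsilon_n}(y))\}\big)=0 \ \forall N\big\}$, and discretizes this product-measure condition. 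Verifying that each discretized piece is weakly open — using that $(x,y)\mapsto 1_{B_{\epsilon_n}(y)}(x)$ has a null discontinuity set for the product measure, and that $\tau\mapsto \tau^{-n}$ and finite unions preserve weak continuity of the induced measures — is the technical heart, and is where I would spend the bulk of the argument.
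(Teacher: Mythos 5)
Your proposal is correct and follows essentially the same route as the paper: density comes from Chaika's Corollary 1, and the $G_\delta$ structure comes from a Fubini-type reformulation (your product-measure condition on the bad set of pairs $(x,y)$ is exactly the paper's ``for a.e.\ $x$, $m\bigl(\bigcup_{n=N}^{\infty}B_{\epsilon_n}(\tau^n(x))\bigr)=1$'' after one application of Fubini), discretized into countably many weakly open conditions on finite unions. You also correctly identify and avoid the trap of discretizing over a countable dense set of centers $y$ with radii fixed in advance; the paper sidesteps the same issue the same way, differing only in the routine detail of how openness of each piece is verified (a.e.\ convergence along subsequences rather than null boundary sets).
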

\begin{proof}
Corollary 1 in Chaika~\cite{C} tells us that there is a  dense set $\mathcal D$ of ergodic maps $\tau$ (actually IETs) such that the geometric shrinking target property above holds.

So, now consider the set $\mathcal G$ of mappings $\tau$ such that for a.e.  $x$ and for all $N$,
\[m\left ( \bigcup\limits_{n=N}^\infty B_{\epsilon_n}(\tau^n(x))\right ) = 1.\]
This is exactly the set of mappings $\mathcal R$.  Indeed, $\tau \in \mathcal G$ if and only if for a.e. $x$, a.e. $y$ is in this tail set for all $N$.  That is, for a.e. $x$, we have for a.e. $y$,  $|y - \tau^n(x)|\le \epsilon_n$ infinitely often.  Equivalently, for a.e. $y$, up to a null set,
\[[0,1] = \bigcup\limits_{n=N}^\infty \tau^{-n}(B_{\epsilon_n}(y)).\]
Hence, $\mathcal R = \mathcal G$.

We can express $\mathcal G =\bigcap\limits_{s=2}^\infty\bigcap\limits_{r=2}^\infty \bigcap\limits_{N=1}^\infty \bigcup\limits_{M\ge N} \mathcal G(r,s,N,M)$ where $\mathcal G(r,s,N,M)$ consists of all $\tau$ such that
$m\left (\bigcup\limits_{n=N}^M B_{\epsilon_n}(\tau^n(x))\right ) > 1 - \frac 1s $ on a set of $x$ of measure greater than $1 - \frac 1r$.  We see that
$\mathcal D \subset \bigcup\limits_{M\ge N}
\mathcal G(r,s,N,M)$.  But also, we claim that each $\mathcal G(r,s,N,M)$ is open in the metric topology on $\mathcal M$.  Hence,  $\bigcup\limits_{M\ge N} \mathcal G(r,s,N,M)$ is an open dense set, and so $\mathcal G$ is a dense $G_\delta$ set.

To show that $W = \mathcal G(r,s,N,M)$ is open, it suffices to show that if $\tau \in W$ and $\tau_j \to \tau$ in the weak topology, as $j \to \infty$, then there is some term $\tau_j \in W$ also.  This characterization of being an open set follows immediately from using the usual metric for the weak topology.

So take $\tau \in W$ and a sequence $(\tau_j:j\ge 1)$ converging to $\tau$ in the weak topology.  Then there is a subsequence, $(\tau_{j_i}:i\ge 1)$ such that $\tau_{j_i} \to\tau$ a.e. as $i\to \infty$.  Indeed, the subsequence can be chosen so that
$\tau^n_{j_i} \to\tau^n$ a.e. as $i \to \infty$, for all $n, N\le n \le M$.
But then it clearly follows that as $i\to \infty$, for a.e. $x$,

\[m\left (\bigcup\limits_{n=N}^M B_{\epsilon_n}(\tau_{j_i}^n(x))
\right )
\to m\left (\bigcup\limits_{n=N}^MB_{\epsilon_n}(\tau^n(x))\right ).\]
But convergence a.e. implies convergence in measure.  Hence, for some large enough $i$,
\[m\left (\bigcup\limits_{n=N}^MB_{\epsilon_n}(\tau_{j_i}^n(x))\right ) > 1 - \frac 1s\]
on a set $E$ with $m(E) > 1 - \frac 1r$, just as was the case for $\tau$ because it is in $W$.  Hence, for such $i$, $\tau_{j_i} \in W$ too.
\end{proof}

It would be better to prove this result for all $y$.  But it is not clear if this is true.  However, if we modify the expectation and just show that this set of maps is residual i.e. its complement is meager, then this is true.  Taking this approach leads to some tricky issues with the order of the quantifiers in the results that follow.  This may be just a consequence of the methods that we use and can be eliminated with better arguments.  First, we have the following.

\begin{theorem}\label{TypShrinkAll}  Take any decreasing sequence $(\epsilon_n)$ with $\epsilon_n > 0$ for all $n\ge 1$, and such that $\sum\limits_{n=1}^\infty \epsilon_n = \infty$.  Consider the set of ergodic mappings $\tau$, such that we have the geometric shrinking target property: there is a set of full measure $G$ such that for all $y$ and for all $N\ge 1$,
\[G \subset \bigcup\limits_{n=N}^\infty \tau^{-n}(B_{\epsilon_n}(y)).\]
Then this set contains a dense $G_\delta$ set in $\mathcal M$.
\end{theorem}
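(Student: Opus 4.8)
The plan is to reduce the ``for all $y$'' property to a statement about a single orbit and then run the Baire-category machinery already used for Theorem~\ref{TypShrink}. Write $U_n^\tau(x)=(\tau^n(x)-\epsilon_n,\tau^n(x)+\epsilon_n)$ for the \emph{open} interval on the circle $[0,1]$ (addition mod one), so $U_n^\tau(x)\subseteq B_{\epsilon_n}(\tau^n(x))$, and let $\mathcal G^{\ast}$ denote the set of $\tau\in\mathcal M$ such that $\limsup_n U_n^\tau(x)=[0,1]$ for a.e.\ $x$. First I would note that if $\tau\in\mathcal G^{\ast}$ and $G=\{x:\limsup_n U_n^\tau(x)=[0,1]\}$, then $m(G)=1$ and, for every $y$ and every $N$, $G\subseteq\bigcup_{n\ge N}\tau^{-n}(B_{\epsilon_n}(y))$ --- for $x\in G$ and any $y$ there are infinitely many $n$ with $y\in U_n^\tau(x)\subseteq B_{\epsilon_n}(\tau^n(x))$, i.e.\ $x\in\tau^{-n}(B_{\epsilon_n}(y))$. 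Hence $\mathcal G^{\ast}\cap\mathcal E$ is contained in the set described in the theorem, where $\mathcal E$ is the classical dense $G_\delta$ set of ergodic maps in $\mathcal M$. So it is enough to prove that $\mathcal G^{\ast}$ is a dense $G_\delta$; then $\mathcal G^{\ast}\cap\mathcal E$ is the required dense $G_\delta$.

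Next I would put $\mathcal G^{\ast}$ in the form used for Theorem~\ref{TypShrink}. Since $[0,1]$ (mod one) is compact, $\limsup_n U_n^\tau(x)=[0,1]$ holds iff for every $N$ there is a finite $M\ge N$ with $\bigcup_{n=N}^{M}U_n^\tau(x)=[0,1]$. Putting $C^\tau(N,M)=\{x:\bigcup_{n=N}^{M}U_n^\tau(x)=[0,1]\}$ (which increases in $M$), one obtains, exactly as there, $\mathcal G^{\ast}=\bigcap_{N\ge1}\bigcap_{r\ge2}\bigcup_{M\ge N}\mathcal H(N,M,r)$ with $\mathcal H(N,M,r):=\{\tau\in\mathcal M:m(C^\tau(N,M))>1-\frac{1}{r}\}$. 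I would then show each $\mathcal H(N,M,r)$ is open by the same weak-topology argument as for $\mathcal G(r,s,N,M)$ in Theorem~\ref{TypShrink}, inserting one extra observation: ``$\bigcup_{n=N}^{M}(c_n-\epsilon_n,c_n+\epsilon_n)=[0,1]$'' is an \emph{open} condition on the tuple $(c_N,\dots,c_M)$, since a finite cover of the compact circle by open intervals has a positive Lebesgue number, so sufficiently small perturbations of the centres still cover. Thus, along a subsequence with $\tau_{j_i}^n\to\tau^n$ a.e.\ for $n\le M$, one gets $\liminf_i 1_{C^{\tau_{j_i}}(N,M)}(x)\ge 1_{C^\tau(N,M)}(x)$ a.e., and Fatou's lemma passes the strict inequality $m(C^\tau(N,M))>1-\frac{1}{r}$ to large $i$. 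This makes $\mathcal G^{\ast}$ a $G_\delta$.

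The remaining step --- and the one I expect to be the real obstacle --- is density of each $\bigcup_{M\ge N}\mathcal H(N,M,r)$. Since $\mathcal G^{\ast}\subseteq\bigcup_{M\ge N}\mathcal H(N,M,r)$, it suffices to produce a set of maps that is dense in $\mathcal M$ and contained in $\mathcal G^{\ast}$. Here the ``for a.e.\ $y$'' density used in Theorem~\ref{TypShrink} is not enough; what is needed is the ``every target $y$'' strengthening, i.e.\ the Kurzweil-type refinement. I would invoke Chaika's theorem~\cite{C}, which extends Kurzweil's theorem: it furnishes a set of ergodic interval exchange maps $\tau$, dense in $\mathcal M$, such that for every decreasing $(\epsilon_n)$ with $\sum_n\epsilon_n=\infty$, for a.e.\ $x$ and \emph{every} $y$ one has $|y-\tau^n(x)|<\epsilon_n$ infinitely often; that is, $\limsup_n U_n^\tau(x)=[0,1]$ for a.e.\ $x$, so $\tau\in\mathcal G^{\ast}$. (If the cited inequality is not strict, apply it to $(\epsilon_n/2)$, which still satisfies the hypotheses.) Then each $\bigcup_{M\ge N}\mathcal H(N,M,r)$ is open and dense, $\mathcal G^{\ast}$ is a dense $G_\delta$, and, as $\mathcal M$ is a Baire space, $\mathcal G^{\ast}\cap\mathcal E$ is the required dense $G_\delta$.

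Everything except that last density input is essentially bookkeeping: a compactness reformulation plus the openness argument already carried out for Theorem~\ref{TypShrink}, the only genuinely new point being the Lebesgue-number remark (which I should state for the circle, not the interval). I would stress that the geometric-distortion-error construction of Section~\ref{Quant} cannot be substituted for the Kurzweil/Chaika input here, because $(\epsilon_n)$ is allowed to tend to zero faster than $1/n$, whereas $r_n^\tau(x)\ge 1/(2n)$ for every map; so a genuinely measure-theoretic/Diophantine ``every target'' statement is unavoidable. The one place to be careful is matching the precise conclusion of \cite{C} to the ``all $y$'' form asserted above.
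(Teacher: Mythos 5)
Your proposal is correct and follows essentially the same route as the paper: pass to open intervals $B^o_{\epsilon_n}(\tau^n(x))$, use compactness of the circle to reduce the a.e.-$x$, all-$y$ covering condition to finite unions, decompose the resulting set as $\bigcap_r\bigcap_N\bigcup_{M\ge N}$ of weakly open sets, and get density from Chaika's Corollary~1 applied with $\epsilon_n/2$. Your Lebesgue-number and Fatou remarks are just slightly more explicit versions of the compactness and measure-passage steps the paper already uses in its openness argument.
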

\begin{proof}  Corollary 1 in Chaika~\cite{C} actually tells us that there is a  dense set $\mathcal D$ of ergodic maps $\tau$ (actually IETs) such that the geometric shrinking target property above holds taking $\epsilon_n/2$ in place of $\epsilon$.

We make a technical modification in the proof of Theorem~\ref{TypShrink} to prove this result.  Let $B_\epsilon^o(z)$ be the interior of $B_\epsilon(z)$, i.e. in this case the open interval instead of the closed interval. Now consider the set $\mathcal G$ of mappings $\tau$ such that for a.e.  $x$ and for all $N$,
\[[0,1] =
\bigcup\limits_{n=N}^\infty B_{\epsilon_n}^o(\tau^n(x)).\]
\noindent That is, consider all $\tau$ such that for any whole number $r \ge 1$, we have for a set of $x$ of measure greater than $1 - 1/r$, for every $N\ge 1$,
$[0,1] = \bigcup\limits_{n=N}^\infty B_{\epsilon_n}^o(\tau^n(x)).$

Now, take such a map $\tau$.
For a fixed $x$, and $M \ge N$, both fixed,
the set of $y$ that are in
$\bigcup\limits_{n=N}^M B_{\epsilon_n}^o(\tau^n(x))$
is an open set.  Hence, for a.e. $x$, by compactness, if $N$ is fixed and $M$ varies, then there is some $M\ge N$ such that actually
$[0,1] = \bigcup\limits_{n=N}^M B_{\epsilon_n}^o(\tau^n(x)).$
But then this shows that we are actually considering the maps
 $\tau$ such that for any whole number $r \ge 1$, we have for a set of $x$ of measure greater than $1 - 1/r$, for every $N\ge 1$, there is $M \ge N$ such that
$[0,1] =\bigcup\limits_{n=N}^M B_{\epsilon_n}^o(\tau^n(x)).$
That is, $\mathcal G =\bigcap\limits_{r=2}^\infty \bigcap\limits_{N=1}^\infty \bigcup\limits_{M\ge N} \mathcal G(r,N,M)$ where $\mathcal G(r,N,M)$ consists of all $\tau$ such that
on a set of $x$ of measure greater than $1 - \frac 1r$, we have
$[0,1] = \bigcup\limits_{n=N}^M B_{\epsilon_n}^o(\tau^n(x)).$
But also, we claim that each $\mathcal G(r,N,M)$ is open in the metric topology on $\mathcal M$.  Hence,  $\bigcup\limits_{M\ge N} \mathcal G(r,N,M)$ is an open set, which contains $\mathcal D$, and so $\mathcal G$ is a dense $G_\delta$ set.

To show that $W = \mathcal G(r,N,M)$ is open, it suffices to show that if $\tau \in W$ and $\tau_j \to \tau$ in the weak topology, as $j \to \infty$, then there is some term $\tau_j \in W$ also.
By passing to a subsequence, we may assume without loss of generality $(\tau_j^n:j\ge 1)$ converges a.e. to $\tau^n$ for all $n, N\le n \le M$.  Say this a.e. behavior occurs on a set of full measure $G$.  But then consider a fixed $y
\in  \bigcup\limits_{n=N}^M B_{\epsilon_n}^o(\tau^n(x))$.  That is,
$|\tau^n(x) - y |  <\epsilon_n$ for some $n, N\le n\le M$.  Hence, for $x\in G$, and
for large enough $j$, $|\tau_j^n(x) - y |  <\epsilon_n$ for some $n, N\le n\le M$.   So for $x\in G$, $y \in \bigcup\limits_{n=N}^M B_{\epsilon_n}^o(\tau_j^n(x))$ for some large enough $j$.  But the sets $\bigcup\limits_{n=N}^M B_{\epsilon_n}^o(\tau_j^n(x))$ are open sets, so again by compactness of $[0,1]$, for $x\in G$, there is some large enough $j$ such that
$[0,1] = \bigcup\limits_{n=N}^M B_{\epsilon_n}^o(\tau_j^n(x))$.
But then if we know that for a set of $x$ of measure greater than $1 - 1/r$, $[0,1] =\bigcup\limits_{n=N}^M B_{\epsilon_n}^o(\tau^n(x))$, we would actually have for a large enough $j$, on a set of $x$ of measure greater than $1 - 1/r$,
$[0,1] =\bigcup\limits_{n=N}^M B_{\epsilon_n}^o(\tau_j^n(x))$.
Thus, $\tau_j \in W$ for some $j$.  Hence, $W$ is open and the proof is complete.
\end{proof}

The geometric shrinking target phenomenon above is based on finding maps and typical orbits that enter all, or almost all, shrinking targets that are intervals, and not general measurable sets. One might be able to obtain a similar result for a given map, or a class of maps (e.g. a generic set in $\mathcal M$), by abstracting the process.  That is,  instead of focusing on geometric shrinking targets, consider shrinking the targets that are just measurable sets.  Using the language we introduced above, this change is a switch from the geometric shrinking target property to the measure-theoretic shrinking target property.  So take a decreasing sequence of measurable sets $(B_n)$ instead of intervals.     Then for which maps, if any, is it the case that $\bigcup\limits_{n = N}^\infty \tau^{-n}(B_n) = [0,1]$ a.e. for all $N \ge 1$?   This property requires  $\sum\limits_{n=1}^\infty m(B_n) = \infty$ by the trivial side of the Borel-Cantelli Lemma.

We will see in Theorem~\ref{MeasTypShrink} that the measure-theoretic shrinking target property above can be written as a $G_\delta$ set in $\mathcal M$ with the usual weak topology.  So the phenomenon would be generic if there is a dense class of maps with this property.  But actually Corollary 1 in Chaika~\cite{C} also gives this too.  See also the fundamental paper by Kurzweil~\cite{K}.  Fix a decreasing sequence of measurable sets $(B_n)$ with $m(B_n) > 0$ for all $n\ge 1$, and $\sum\limits_{n=1}^\infty m(B_n) = \infty$.  We can take a measure preserving map $\sigma$ such that $\sigma (B_n) = [0,m(B_n)]$ for all $n\ge 1$.  Then in Corollary 1 take the case that $y = 0$ from the definition of the strong Kurzweil property.  We get a dense set of maps (particular IETs again) such that $\bigcup\limits_{n=N}^\infty \tau^{-n}\sigma (B_n) = [0,1]$ up to a null set, for any $N \ge 1$.  Hence, the maps $\sigma^{-1}\circ \tau\circ \sigma$, with $\tau$ being a.e. IET as in Corollary 1 in \cite{C}, give the dense set $\mathcal D$ we need to get a category result.  This gives a very general result that the generic map has $(B_n)$ as an a.e. visible measure-theoretic shrinking target.

\begin{theorem}\label{MeasTypShrink}  For any decreasing sequence of measurable sets $(B_n)$ with $m(B_n) > 0$ for all $n\ge 1$, and $\sum\limits_{n=1}^\infty m(B_n) = \infty$,  there is a dense $G_\delta$ set of ergodic maps $\tau$ such that up to a null set,
\[\bigcap\limits_{N =1}^\infty \bigcup\limits_{n=N}^\infty \tau^{-n}(B_n) = [0,1].\]
\end{theorem}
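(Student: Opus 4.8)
The plan is to mirror the proof of Theorem~\ref{TypShrink} almost verbatim, replacing the balls $B_{\epsilon_n}(y)$ by the fixed measurable sets $B_n$ and dropping the ``for a.e. $y$'' quantifier entirely, since the target is now a single sequence of sets rather than a family indexed by $y$. Concretely, set $\mathcal G$ to be the collection of ergodic $\tau$ for which, for a.e. $x$ and every $N\ge 1$, $m\!\left(\bigcup_{n=N}^\infty \tau^{-n}(B_n)\right)=1$; this is exactly the a.e.\ visibility condition in the statement. As already observed in the paragraph preceding the theorem, the conjugation trick (take $\sigma$ measure preserving with $\sigma(B_n)=[0,m(B_n)]$ and apply Corollary~1 of Chaika~\cite{C} with $y=0$ to get IETs $\tau$, then conjugate by $\sigma$) furnishes a dense set $\mathcal D\subset\mathcal G$ inside $\mathcal M$. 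So the only remaining task is the $G_\delta$ description together with openness of the pieces.

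For the $G_\delta$ decomposition I would write
\[
\mathcal G=\bigcap_{s=2}^\infty\bigcap_{r=2}^\infty\bigcap_{N=1}^\infty\bigcup_{M\ge N}\mathcal G(r,s,N,M),
\]
where $\mathcal G(r,s,N,M)$ is the set of $\tau$ such that $m\!\left(\bigcup_{n=N}^M \tau^{-n}(B_n)\right)>1-\tfrac1s$ on a set of $x$ of measure greater than $1-\tfrac1r$. The inner union over $M$ absorbs $\mathcal D$ (for $\tau\in\mathcal D$, monotone convergence in $M$ of $m(\bigcup_{n=N}^M\tau^{-n}(B_n))$ up to $1$ forces membership in $\mathcal G(r,s,N,M)$ for $M$ large), and the identity $\mathcal G=\bigcap\bigcap\bigcap\bigcup\mathcal G(r,s,N,M)$ follows by the usual $\varepsilon$--$\delta$ unwinding of ``for a.e.\ $x$ the measure of the tail equals $1$ for all $N$'': note that $m(\bigcup_{n=N}^\infty\tau^{-n}(B_n))=\lim_{M\to\infty}m(\bigcup_{n=N}^M\tau^{-n}(B_n))$, so the sup over $M$ of these finite-stage measures is $1$ iff for every $s$ there is $M$ with the finite-stage measure exceeding $1-1/s$, and then one integrates/uses Egorov-type bookkeeping to pass between ``a.e.\ $x$'' and ``measure $>1-1/r$ set of $x$''. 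Thus it suffices to show each $\mathcal G(r,s,N,M)$ is open in the weak topology on $\mathcal M$, whence each $\bigcup_{M\ge N}\mathcal G(r,s,N,M)$ is open and dense and $\mathcal G$ is a dense $G_\delta$.

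For openness of $W=\mathcal G(r,s,N,M)$, I would argue exactly as in Theorem~\ref{TypShrink}: if $\tau\in W$ and $\tau_j\to\tau$ weakly, pass to a subsequence $\tau_{j_i}$ with $\tau_{j_i}^n\to\tau^n$ a.e.\ for every $n$ with $N\le n\le M$. The key point is that $\tau_{j_i}^{-n}(B_n)\to\tau^{-n}(B_n)$ in the sense that $m\big(\tau_{j_i}^{-n}(B_n)\,\triangle\,\tau^{-n}(B_n)\big)\to0$; equivalently $1_{\tau_{j_i}^{-n}(B_n)}=1_{B_n}\circ\tau_{j_i}^n\to 1_{B_n}\circ\tau^n=1_{\tau^{-n}(B_n)}$ in $L^1(m)$. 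Granting this, $m\!\left(\bigcup_{n=N}^M\tau_{j_i}^{-n}(B_n)\right)\to m\!\left(\bigcup_{n=N}^M\tau^{-n}(B_n)\right)$, and this convergence of the function $x\mapsto 1_{\bigcup_{n=N}^M\tau_{j_i}^{-n}(B_n)}(x)$ in $L^1$ gives, for large $i$, that the set where the finite-stage measure exceeds $1-1/s$ still has measure $>1-1/r$, so $\tau_{j_i}\in W$.

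The main obstacle, and the one place that genuinely differs from Theorem~\ref{TypShrink}, is the claim $1_{B_n}\circ\tau_{j_i}^n\to 1_{B_n}\circ\tau^n$ in measure from $\tau_{j_i}^n\to\tau^n$ a.e. This is immediate when $B_n$ is an interval (as in the earlier proofs) because $1_{B_n}$ is continuous off a null set, but for a general measurable $B_n$ it is false pointwise and must be handled by approximation: given $\delta>0$ choose an open set $U_n\supset B_n$ and a compact $K_n\subset B_n$ with $m(U_n\setminus K_n)<\delta$, so that $1_{B_n}$ agrees with a function $h_n$ that is continuous except on a set of measure $<\delta$; since $\tau$ (and each $\tau_{j}$) is measure preserving, $m\{x: \tau_{j_i}^n(x)\ \text{lands in the bad set}\}<\delta$ uniformly in $i$, and off that set $h_n\circ\tau_{j_i}^n\to h_n\circ\tau^n$ by a.e.\ convergence of $\tau_{j_i}^n$ and continuity of $h_n$ at the limiting points. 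Letting $\delta\to0$ after the standard $3\delta$ estimate yields convergence in measure. Once this measure-preservation-plus-regularity lemma is in place, the rest of the argument is the routine Baire-category bookkeeping copied from Theorem~\ref{TypShrink}, and the conjugation-with-$\sigma$ application of Chaika's Corollary~1 supplies the required dense set.
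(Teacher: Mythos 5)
Your proposal is correct and follows the paper's own route: the same finite-stage decomposition $\mathcal G=\bigcap_s\bigcap_N\bigcup_{M\ge N}\mathcal G(\cdot,s,N,M)$ with $\mathcal G$ open-by-pieces and dense via the conjugation of Chaika's IETs described just before the theorem. Two remarks. First, you have imported a quantifier from Theorem~\ref{TypShrink} that is vacuous here: the quantity $m\bigl(\bigcup_{n=N}^{M}\tau^{-n}(B_n)\bigr)$ does not depend on any point $x$, so the clauses ``for a.e.\ $x$'' and ``on a set of $x$ of measure greater than $1-\tfrac1r$'' (and hence the whole parameter $r$) are redundant --- each $\mathcal G(r,s,N,M)$ is just the set of $\tau$ with $m\bigl(\bigcup_{n=N}^{M}\tau^{-n}(B_n)\bigr)>1-\tfrac1s$, which is what the paper uses. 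This does not invalidate anything, but the Egorov-type bookkeeping you allude to is not needed. Second, your openness argument is genuinely different from, and heavier than, the paper's. The paper observes that weak convergence $\tau_j\to\tau$ directly gives $m\bigl(\bigcup_{n=N}^{M}\tau_j^{-n}(B_n)\bigr)\to m\bigl(\bigcup_{n=N}^{M}\tau^{-n}(B_n)\bigr)$: by definition of the weak topology, $m(\tau_j^{-1}(A)\,\triangle\,\tau^{-1}(A))\to0$ for measurable $A$, powers are weakly continuous, and symmetric differences of finite unions are controlled by those of the pieces. You instead pass to an a.e.-convergent subsequence of the maps and then repair the discontinuity of $1_{B_n}$ by a Lusin--Urysohn regularization combined with measure preservation; this is correct (your uniform-in-$i$ bound on the bad set is exactly what makes the $3\delta$ estimate close), and it is the right fix if one insists on the pointwise-convergence mechanism of Theorem~\ref{TypShrink}, but for measurable targets the symmetric-difference formulation of weak convergence makes the detour unnecessary. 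The density step and the monotone-convergence absorption of $\mathcal D$ into $\bigcup_{M\ge N}\mathcal G(\cdot,s,N,M)$ are as in the paper.
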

\begin{proof}  Consider the set $\mathcal G$ of mappings $\tau$ such that for all $N \ge 1$,
\[m\left ( \bigcup\limits_{n=N}^\infty \tau^{-n}(B_n)\right ) = 1.\]
We can express $\mathcal G =\bigcap\limits_{s=2}^\infty \bigcap\limits_{N=1}^\infty \bigcup\limits_{M\ge N} \mathcal G(s,N,M)$ where $\mathcal G(s,N,M)$ consists of all $\tau$ such that
$m\left (\bigcup\limits_{n=N}^M \tau^{-n}(B_n)\right ) > 1 - \frac 1s $.
But in the weak topology on $\mathcal M$, as $\tau_j\to \tau$, we would have
\[m\left (\bigcup\limits_{n=N}^M \tau_j^{-n}(B_n)\right ) \to
m\left (\bigcup\limits_{n=N}^M \tau^{-n}(B_n)\right ) .\]
So $\mathcal G(s,N,M)$ is open in the weak topology, and therefore so is
$\bigcup\limits_{M\ge N} \mathcal G(s,N,M)$.  Hence, $\mathcal G$ is a $G_\delta$ set.  The comments above show how the results in Chaika~\cite{C} prove that it is also dense.
\end{proof}

\begin{remark} a) It is worth observing that the choice of $(B_n)$ and $\tau$ here have certain inherent mutability.  Indeed, suppose $(B_n)$ and $(C_n)$ are decreasing sequences of measurable sets with $m(B_n) = m(C_n)$ for all $n\ge 1$.  Then there is an element $\sigma \in \mathcal M$ such that $\sigma (B_n) = C_n$ for $n$.  To do this, let $B_o = C_o = X$, and take invertible, measure-preserving maps $\sigma_n$ such that $\sigma_n(B_n\backslash B_{n+1}) = C_n\backslash C_{n+1}$ for all $n\ge 0$.  Then let $\sigma$ be $\sigma_n$ on $B_n\backslash B_{n+1}$.  Also, take $\sigma_\infty$ to be any invertible, measure-preserving map from $\bigcap\limits_{n=1}^\infty B_n$ onto $\bigcap\limits_{n=1}^\infty C_n$.  These choices define an invertible, measure-preserving map $\sigma$ on $X$ such $\sigma(B_n) = C_n$ for all $n,0\le n \le \infty$.  Now, if $(B_n)$ is a.e. visible with respect to $\tau$, then $(C_n)$ is a.e. visible with respect to $\omega = \sigma\circ\tau\circ\sigma^{-1}$.  But $\tau$ and $\omega$ are isomorphic.  So we can exchange an a.e. visible measure-theoretic shrinking target, with respect to $\tau$, for any other measure-theoretic shrinking target with the same measure-theoretic footprint and preserve a.e. visibility if we switch to another map, actually to an isomorphic copy of $\tau$.
\medskip

\noindent b) Take a sequence of measurable sets $(B_n)$ with $\sum\limits_{n=1}^\infty m(B_n) = \infty$.  They could be assumed to be nested, i.e. decreasing, or not.  We wonder if the shrinking target property is still generic if one specifies that the powers come from a particular subsequence of $\mathbb N$.  That is, we fix $(m_n)$ increasing, and then seek maps $\tau$ such that up to a null set
 \[\bigcup\limits_{n = N}^\infty \tau^{-m_n}(B_n) = [0,1]\]
 for all $N$.
Is this typical or can it be first category with a suitable fixed choice of $(m_n)$?  The same issue comes up with the geometric shrinking target property.  Is it typical that a map $\tau$ has the property that for a.e. $x$ and all $y$ (or say just a.e. $y$), we have  $\tau^{m_n}(x) - y \in B_n \mod 1$ infinitely often.  We would like this to happen generically in $\tau$.  The case $m_n = n$ is the original case.    Of course, we  need to assume here that $\sum\limits_{n=1}^\infty m(B_n) = \infty$.
But if one switches to a recurrence phenomenon instead of a density phenomenon perhaps this no longer matters.  Indeed, it would be very interesting if it is a generic phenomenon that for any  $\epsilon_n  > 0$,
$\liminf\limits_{n\to \infty}  \frac 1{\epsilon_n} |x - \tau^{m_n}(x)| \le 1$ for a.e. $x$.
\end{remark}

\begin{remark}  It is possible to use almost invariant sets to produce examples that restrict the category results above.  We will be more specific about this in the next section.  But here is roughly the idea.  One first fixes an ergodic map $\tau$.  Then choose $(\delta_n: n \ge 1)$ decreasing to zero.  One constructs explicitly a decreasing sequence of measurable sets $(B_n:n\ge 1)$ such that $m(B_n) \ge \delta_n$ for all $n\ge 1$, and $m\left (\bigcup\limits_{n=1}^\infty \tau^{-n}(B_n)\right ) < 1$.  So $C = X\backslash \bigcup\limits_{n=1}^\infty \tau^{-n}(B_n)$ has positive measure, and $\tau^n(x) \notin B_n$ for every $x\in C$.  Thus $(B_n)$ is a slowly shrinking target that is still not visible to points in $C$.  Hence, while the generic map $\tau$ has the a.e. visible measure-theoretic shrinking target property for $(B_n)$, not every map does.  In fact, no matter how slowly the measures $m(B_n)$ are made to decrease to zero, one can show that there can be a map $\tau$ that fails to have the a.e. visible measure-theoretic shrinking target property for the specific  $(B_n)$ that are constructed for $\tau$.  It might be possible to carry out this construction so that the set of maps are actually dense, instead of being just one map.
\end{remark}

\section{Shrinking Targets for Particular Maps}\label{particular}

In Adams and Rosenblatt~\cite{AR}, the goals were different than the ones in this article.  They were to consider various aspects of functions that are coboundaries with respect to a set of maps.  But in the process, there were constructions, particularly ones of sets that slowly fill out space, that we can use to get interesting information about measure-theoretic shrinking targets for a given fixed map.  We give first a brief explanation of some of the relevant results and methods in \cite{AR}, and then turn to applying them to give results about measure-theoretic shrinking targets.

A $\tau$-coboundary is a function $f\in L_r(X)$ such that there is some $F\in L_s(X)$ such that $f = F - F\circ \tau$.  The function $F$ is called the {\em transfer function}.  It is well-known that for $\tau$ ergodic, with $1\le r=s < \infty$, the set of $\tau$-coboundaries is first category.  The following is a less known fact about coboundaries from \cite{AR}.

\begin{prop}\label{mostfcnsnotcob} Assume $\tau$ is ergodic.
The generic function $f \in L_r(X), 1 \le r \le \infty$ is not a $\tau$-coboundary with
a measurable transfer function.
\end{prop}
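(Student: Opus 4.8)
The plan is to show the stronger statement that the set $\mathcal{C} \subseteq L_r(X)$ of functions $f$ which are $\tau$-coboundaries with a measurable transfer function is \emph{nowhere dense}; since a nowhere dense set is meager, this gives that the generic $f \in L_r(X)$ is not such a coboundary. To do this I would first establish the following claim: if $f \in L_r(X) \subseteq L_1(X)$ and $f = F - F\circ\tau$ with $F$ measurable and finite a.e., then $\int_X f\,dm = 0$. Granting the claim, $\mathcal{C}$ is contained in the hyperplane $\mathcal{Z} = \{f \in L_r(X) : \int_X f\,dm = 0\}$, the kernel of the linear functional $f \mapsto \int_X f\,dm$; since we are on a probability space this functional is bounded on every $L_r$, $1 \le r \le \infty$ (because $|\int_X f\,dm| \le \|f\|_1 \le \|f\|_r$), so $\mathcal{Z}$ is a closed proper subspace, hence has empty interior, hence is nowhere dense --- and so is the subset $\mathcal{C}$.

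For the claim, I would argue as follows. Telescoping the Birkhoff sums gives $S_n f := \sum_{j=0}^{n-1} f\circ\tau^j = F - F\circ\tau^n$, and since $\tau$ is measure-preserving, $\int_X S_n f\,dm = n\int_X f\,dm$. By Birkhoff's ergodic theorem applied to $f \in L_1$ (using that $\tau$ is ergodic), $\tfrac1n S_n f \to \int_X f\,dm =: c$ a.e., while trivially $\tfrac1n F \to 0$ a.e.\ since $F$ is finite a.e.; hence $\tfrac1n (F\circ\tau^n) = \tfrac1n F - \tfrac1n S_n f \to -c$ a.e. Now suppose toward a contradiction that $c \ne 0$, say $c > 0$. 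Then $F(\tau^n x) \to -\infty$ for a.e.\ $x$, so for each $T > 0$ the orbit of $x$ eventually leaves the set $\{F \ge -T\}$ and therefore $\tfrac1N \sum_{n=0}^{N-1} 1_{\{F \ge -T\}}(\tau^n x) \to 0$ a.e.; but the ergodic theorem says this average tends a.e.\ to $m(\{F \ge -T\})$, forcing $m(\{F \ge -T\}) = 0$ for every $T > 0$, and letting $T \to \infty$ this contradicts $F$ being finite a.e. The case $c < 0$ is symmetric, so $c = 0$.

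The one genuine obstacle here is the claim, and the point worth flagging is that it is not automatic: allowing the transfer function to be merely measurable rather than an $L_s$ function is a substantial enlargement of the class of coboundaries, and such a transfer function need not be integrable, so one cannot simply split $\int(F - F\circ\tau)$ as $\int F - \int F\circ\tau = 0$. The content of the argument is the escape-versus-recurrence dichotomy for Birkhoff orbits --- a nonzero drift $c$ would carry $F\circ\tau^n$ to infinity at linear speed, which the ergodic theorem forbids as soon as the level sets of $F$ carry positive mass, and they do because $F$ is finite a.e. (Alternatively one could quote Atkinson's recurrence theorem, or the characterization of measurable coboundaries by boundedness in measure of $(S_n f)$, but the direct computation above is the most self-contained route.) Once the claim is in hand the category conclusion is immediate, and in fact yields nowhere density, not merely meagerness.
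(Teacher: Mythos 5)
The paper does not actually prove this proposition; it is quoted as a ``less known fact'' from Adams and Rosenblatt \cite{AR}, so there is no in-text argument to compare yours against step by step. Taken on its own terms, your argument is internally sound. The key claim --- that $f=F-F\circ\tau$ with $f\in L_1(X)$ and $F$ measurable and a.e.\ finite forces $\int_X f\,dm=0$ --- is true, and your proof of it (telescoping, Birkhoff applied to $f$ and then to $1_{\{F\ge -T\}}$, the escape-versus-recurrence contradiction) is correct; as you note, this is the zero-mean half of Atkinson's recurrence theorem. The category step is also correct: the mean-zero functions form a closed proper hyperplane of $L_r(X)$ on a probability space, hence are nowhere dense, and so is any subset.

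The difficulty is that this reads the proposition in its weakest literal sense and thereby trivializes it. What you have actually shown is that the generic $f\in L_r(X)$ has nonzero mean --- a statement with no dynamical content, which does not use ergodicity of $\tau$ (beyond identifying the Birkhoff limit in the auxiliary claim) and, more tellingly, does not see the distinction the paper is drawing between transfer functions in $L_s$ (the ``well-known'' case) and merely measurable ones (the ``less known'' case): the identical hyperplane argument disposes of both, since $L_s$-coboundaries also have zero mean. The substance of the result being invoked is that the measurable-transfer coboundaries are meager \emph{within} the mean-zero functions, where they form a dense linear subspace (every $L_\infty$-transfer coboundary is one, and $(I-U_\tau)L_\infty$ is dense in the mean-zero part of $L_r$ for ergodic $\tau$); there your argument gives nothing, since the hyperplane is obviously not nowhere dense in itself. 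That this is the intended content is confirmed by the discussion immediately following the proposition: the explicit non-coboundary constructed there, $f=1_F-m(F)$, is deliberately arranged to be mean-zero, and the obstruction used is linear growth of $\|S_n^\tau f\|_\infty$, not a nonzero mean. A proof of the intended statement needs a genuinely dynamical ingredient --- for instance the characterization of measurable coboundaries by tightness (boundedness in measure) of the partial sums $(S_n^\tau f)$, a $G_\delta$ description of that tightness condition, and a Rokhlin-tower construction showing that functions with badly unbounded partial sums are dense among mean-zero functions. So while nothing you wrote is false, you should not regard this as a proof of the result the paper is citing.
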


The Baire category statement in Proposition~\ref{mostfcnsnotcob}, and other results in ~\cite{AR}, show only indirectly how to construct functions that are not coboundaries.  For this reason, one might want a better understanding of how to construct such functions directly.

Here is a simple, basic example.
Suppose we construct a set $E$ such that $m(\bigcup\limits_{k=1}^n \tau^{-k} (E)) < 1$ for all $n$.  Let $F = X\backslash E$.  Then consider $f = 1_F - m(F)$.  This $f$ is a bounded, mean-zero function.  Taking  $S_n^\tau f$, to be $\sum\limits_{k=1}^n f \circ \tau^k$ for any $n$, we have $S_n^\tau f = n - np(F)$ on $\bigcap\limits_{k=1}^n \tau^{-k} (F)$.  Since
$m(\bigcup_{k=1}^n \tau^{-k} (E)) < 1$, we have $m(\bigcap\limits_{k=1}^n \tau^{-k}(F)) > 0$ for all $n$. Hence,
$\|S_n^\tau f\|_\infty \ge np(E)$ and so, by a well-known principle discussed in ~\cite{AR}, $f$ is not a $\tau$-coboundary with transfer function in $L_\infty(X)$.

For the purposes of this article, there is another fact that such sets $E$ gives us.  Take $B_n = X\backslash\bigcup_{k=1}^n \tau^{-k} (E)$.  Then $(B_n)$ is a shrinking target and $\tau^n(B_n)$ is disjoint from $E$ for all $n$.  Hence, $(B_n)$ is not  a.e. visible with respect to $\tau^{-1}$.

Now there are many ways to construct such sets $E$.  For example, it is easy to show this result from \cite{AR}.

\begin{prop}\label{setgensmall}  The generic set $E$ has $m(\bigcup\limits_{k=1}^n \tau^{-k} (E)) < 1$ for all $n$.
\end{prop}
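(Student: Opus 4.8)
The plan is to work in the right Baire space: let $(\Sigma,d)$ be the collection of measurable subsets of $X$, with $A$ and $B$ identified when $m(A\triangle B)=0$, equipped with the metric $d(A,B)=m(A\triangle B)$. This is a complete metric space (separable, since $(X,\beta,m)$ is), so the Baire category theorem applies and ``the generic set $E$'' means ``$E$ lying in a comeager subset of $\Sigma$''. For $n\ge 1$ put $g_n(E)=m\big(\bigcup_{k=1}^n\tau^{-k}(E)\big)$ and $\mathcal A_n=\{E\in\Sigma: g_n(E)<1\}$. The assertion is that $\bigcap_{n\ge 1}\mathcal A_n$ is comeager, so it suffices to show that each $\mathcal A_n$ is open and dense; then $\bigcap_{n}\mathcal A_n$ is a dense $G_\delta$.

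Openness I would dispatch first, as routine: since $\tau$ is measure preserving each $E\mapsto\tau^{-k}(E)$ is an isometry of $(\Sigma,d)$, and $(A\cup B)\triangle(A'\cup B')\subseteq(A\triangle A')\cup(B\triangle B')$ together with $|m(U)-m(V)|\le m(U\triangle V)$ shows that $g_n$ is $n$-Lipschitz on $(\Sigma,d)$. Hence $\mathcal A_n=g_n^{-1}\big([0,1)\big)$ is open.

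The substance is density of $\mathcal A_n$, and this is the only place a genuine idea is needed: how to perturb an arbitrary $E_0$ by less than $\epsilon$ so as to force $\bigcup_{k=1}^n\tau^{-k}(E)$ to miss a set of positive measure. The tool is Rokhlin's lemma. Since $\tau$ is ergodic on a non-atomic space it is aperiodic, so given $E_0$, $\epsilon>0$ and $n$, I would choose a height $h>n/\epsilon$ with $h\ge n+1$ and a Rokhlin base $R$ with $R,\tau R,\dots,\tau^{h-1}R$ pairwise disjoint and $m\big(\bigcup_{j=0}^{h-1}\tau^jR\big)>1/2$; this forces $0<m(R)\le 1/h$. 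Then set $E=E_0\setminus\bigcup_{k=1}^n\tau^k(R)$. On one hand, $E\triangle E_0\subseteq\bigcup_{k=1}^n\tau^k(R)$, whose measure is $n\,m(R)\le n/h<\epsilon$ (the levels $\tau R,\dots,\tau^nR$ being disjoint since $n\le h-1$), so $d(E,E_0)<\epsilon$. On the other hand $E$ is disjoint from $\tau^k(R)$ for $1\le k\le n$, hence $\tau^k(x)\notin E$ for every $x\in R$ and every such $k$; equivalently $R\cap\tau^{-k}(E)=\emptyset$ for $1\le k\le n$, so $R\subseteq X\setminus\bigcup_{k=1}^n\tau^{-k}(E)$ and $g_n(E)\le 1-m(R)<1$. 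Thus $E\in\mathcal A_n$, so $\mathcal A_n$ is dense, completing the proof.

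The main obstacle here is really just the main idea, namely recognizing that a \emph{tall} Rokhlin tower is what one wants: it makes $\bigcup_{k=1}^n\tau^k(R)$ arbitrarily small in measure (so the perturbation is small) while keeping the base $R$ of positive measure and disjoint from that union (so visibility fails on $R$). Everything else — openness via Lipschitz estimates, and the passage from ``each $\mathcal A_n$ open dense'' to ``$\bigcap_n\mathcal A_n$ comeager'' — is bookkeeping.
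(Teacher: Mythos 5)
Your proof is correct, and it is essentially the argument the paper has in mind: the paper only asserts that the proof is ``easy'' and, in the remark following the proposition, sketches exactly your density perturbation --- delete from a given set the first $n$ forward images of a small set of positive measure, so that this small set becomes invisible --- while your Lipschitz estimate for $E\mapsto m\bigl(\bigcup_{k=1}^n\tau^{-k}(E)\bigr)$ supplies the openness the paper leaves unstated. The one place you overshoot is the density step: the Rokhlin lemma is not needed, since for any set $R$ with $0<m(R)<\epsilon/n$ one already has $m\bigl(\bigcup_{k=1}^n\tau^k(R)\bigr)\le n\,m(R)<\epsilon$ by subadditivity and measure preservation (disjointness of the levels buys nothing here), which is precisely what the paper's remark exploits with its condition $\sum_n n\,m(E_n)\le\epsilon$. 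So the ``main idea'' is even cheaper than you suggest; non-atomicity of $(X,\beta,m)$ is all that is required to find such an $R$.
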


\begin{remark} The proof of Proposition~\ref{setgensmall} is easy and includes a  simple way to construct a particular set $E$ with the desired property.  Fix $\epsilon > 0$.  Take a sequence of sets $(E_n)$ with
$m(E_n) > 0$ for all $n$, and with $\sum\limits_{n=1}^\infty nm(E_n) \le
\epsilon$.  Then let $E = X\backslash\left (\bigcup\limits_{n=1}^\infty \bigcup\limits_{k=1}^n \tau^k(E_n)\right )$.  Hence, we can also arrange that $p(E) \ge 1 -\epsilon$ (which follows of course from generic property in Proposition~\ref{setgensmall} too).
\end{remark}

The constructions above raised the question of how slowly we can arrange $m(\bigcup\limits_{k=1}^n \tau^{-k} (E))$ to grow.  In fact, we can get this to grow as slowly as we like.   To show this, we used this consequence of the Rokhlin Lemma.  This lemma was also an important feature in some of the arguments in del Junco and Rosenblatt~\cite{dJR}; see the corresponding lemma in that paper.

\begin{lemma}\label{AI}  Suppose $\epsilon > 0$, $0 < \delta < 1$, and $n \ge 1$.  Then there is a set $A\in \mathcal B$ such that $m(A) =
\delta$,
and $m\left (\bigcap\limits_{k=1}^n \tau^k(A) \cap A\right ) \ge (1 - \epsilon)m(A)$.
\end{lemma}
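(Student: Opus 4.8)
The plan is to realize $A$ as a tall, thin Rokhlin tower and exploit the fact that such a tower is almost invariant under the first $n$ powers of $\tau$. Assuming without loss of generality that $0<\epsilon<1$, I would first fix a small $\eta\in(0,1-\delta)$ and apply the Rokhlin lemma (valid since $\tau$ is ergodic on a non-atomic space) to obtain, for a height $N$ to be chosen large, a set $F$ with $F,\tau F,\dots,\tau^{N-1}F$ pairwise disjoint and $m\big(\bigcup_{j=0}^{N-1}\tau^{j}F\big)>1-\eta$; in particular $m(F)>(1-\eta)/N$.

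Next I would cut $A$ out of the tower as a \emph{sub-tower} rather than as a union of whole levels, which is the device that lets me hit the measure $\delta$ on the nose. Set $p=\lceil \delta N/(1-\eta)\rceil$, so that $\delta/p\le(1-\eta)/N<m(F)$, pick $F'\subseteq F$ with $m(F')=\delta/p$ (possible by non-atomicity of $m$ restricted to $F$), and put $A=\bigsqcup_{j=0}^{p-1}\tau^{j}F'$, so $m(A)=p\,m(F')=\delta$ exactly. Provided $p+n\le N$, none of the relevant shifts wraps past the top of the tower, so the sets $\tau^{j}F'$ in play are pairwise disjoint and a direct computation gives $\tau^{k}(A)\cap A=\bigsqcup_{j=k}^{p-1}\tau^{j}F'$ for $1\le k\le n$, whence
\[\bigcap_{k=1}^{n}\big(\tau^{k}(A)\cap A\big)=\bigsqcup_{j=n}^{p-1}\tau^{j}F',\qquad m\Big(\bigcap_{k=1}^{n}\tau^{k}(A)\cap A\Big)=(p-n)\,m(F')=\Big(1-\tfrac np\Big)\delta.\]

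Finally I would choose $N$ large enough that all the constraints hold simultaneously: $p+n\le N$ (automatic for large $N$ since $p\sim\delta N$ with $\delta<1$), $\delta/p\le m(F)$ (already arranged), and $p\ge n/\epsilon$ (again automatic for large $N$ since $p\ge\delta N$), so that $\big(1-\tfrac np\big)\delta\ge(1-\epsilon)\delta=(1-\epsilon)m(A)$, as required. I expect the only genuinely fiddly point to be this bookkeeping — simultaneously forcing $p$ large enough for the overlap estimate, small enough to sit inside the tower, and compatible with $m(F')=\delta/p\le m(F)$ — and the decision to build $A$ as a sub-tower over a slice $F'\subseteq F$ instead of a union of full levels is exactly what keeps that bookkeeping clean.
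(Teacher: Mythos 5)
Your proof is correct and follows exactly the route the paper intends: the paper gives no details for this lemma, merely describing it as a consequence of the Rokhlin Lemma (with a pointer to del Junco--Rosenblatt), and your tall-tower construction with the sub-tower over a slice $F'\subseteq F$ is the standard way to realize it, with the bookkeeping ($\eta<1-\delta$ so that $p+n\le N$ is attainable, $p\ge n/\epsilon$, $m(F')=\delta/p\le m(F)$) all handled correctly. Nothing to add.
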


Now here is the proposition we need to get slowly sweeping out sets.  This is a variation on the one that appears in ~\cite{AR}.

\begin{prop} \label{gottheslowrate} Consider a sequence $(\epsilon_n: n \ge 1)$ with $0 < \epsilon_n < 1/2$ for all $n$, and $\epsilon_n$ decreasing to $0$.  Then there exists $E$ such that $m(E) > 0$ and  $m(\bigcup\limits_{k=1}^n \tau^k(E)) \le 1-\epsilon_n$ for all $n$.
\end{prop}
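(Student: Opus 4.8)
The plan is to build $E$ as the complement of a countable union of blocks, where each block is engineered via Lemma~\ref{AI} to occupy only a tiny fraction of space even after being swept forward up to the relevant time, yet collectively the blocks force the sweep-out $m(\bigcup_{k=1}^n \tau^k(E))$ to fall short of $1$ by at least $\epsilon_n$ at every stage $n$. More precisely, I would fix, for each $j\ge 1$, a set $A_j$ produced by Lemma~\ref{AI} with $m(A_j)=\delta_j$ small and with $m\bigl(\bigcap_{k=1}^{N_j}\tau^k(A_j)\cap A_j\bigr)\ge (1-\eta_j)m(A_j)$ for a fast-decaying $\eta_j$ and a threshold $N_j$ chosen to match the rate $(\epsilon_n)$. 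The key point of Lemma~\ref{AI} is that $A_j$ is \emph{almost invariant} under $\tau,\tau^2,\dots,\tau^{N_j}$, so $\bigcup_{k=1}^{n}\tau^{-k}(A_j)$ stays within measure roughly $(1+\eta_j)\delta_j$ of $A_j$ for all $n\le N_j$; thus removing the forward images of the $A_j$'s from $X$ leaves a set $E$ whose own forward sweep-out is small.

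First I would set up the bookkeeping: since $\epsilon_n\downarrow 0$, partition $\mathbb N$ into consecutive blocks $I_j=\{n: N_{j-1}<n\le N_j\}$ (with $N_0=0$) on which $\epsilon_n$ is essentially constant, say $\epsilon_n\le 2\epsilon_{N_{j-1}+1}=:2\theta_j$ with $\theta_j\downarrow 0$. For each $j$, invoke Lemma~\ref{AI} with parameters $\delta=\delta_j$, $\epsilon=\eta_j$, and $n=N_j$, where I choose $\delta_j$ and $\eta_j$ small enough that $\sum_{j\ge 1}(\delta_j+N_j\eta_j\delta_j)<\tfrac12$ and also $\delta_j\ge$ some positive amount comparable to $\theta_j$ (this is the balancing act: $\delta_j$ must be large enough to obstruct the cover by $\epsilon_n$ at times $n\in I_j$, but small enough that summing over all $j$ costs less than $\tfrac12$). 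Then define
\[
E = X\Bigm\backslash \Bigl(\bigcup_{j=1}^\infty\ \bigcup_{k=1}^{N_j}\tau^k(A_j)\Bigr).
\]
By the summability condition, $m(E)\ge 1-\sum_j N_j\delta_j>0$ — actually I would instead use the near-invariance to bound the removed set more tightly, but the crude bound already gives $m(E)>0$ after shrinking the $\delta_j$. Next, for a fixed $n$, let $j$ be the index with $n\in I_j$. Since $\tau^k(E)\subseteq X\setminus\tau^{k}\bigl(\bigcup_{i\le k}\tau^{-i}(A_{j'})\bigr)$ for the appropriate $j'$ and all $k\le n$, and in particular $\tau^k(E)$ misses $\tau^{k}\tau^{-k}(A_j\cap\bigcap_{l=1}^{N_j}\tau^l(A_j))\supseteq$ a set of measure $\ge(1-\eta_j)\delta_j$ that is the \emph{same} for every $k\le n\le N_j$ (this is exactly where almost-invariance of $A_j$ under $\tau,\dots,\tau^{N_j}$ is used), we get that $\bigcup_{k=1}^n\tau^k(E)$ omits a fixed set of measure $\ge(1-\eta_j)\delta_j$. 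Choosing $\delta_j,\eta_j$ so that $(1-\eta_j)\delta_j\ge\theta_j\ge\tfrac12\epsilon_{N_{j-1}+1}\ge\tfrac12\epsilon_n$ — and then adjusting constants to remove the factor $\tfrac12$ — yields $m(\bigcup_{k=1}^n\tau^k(E))\le 1-\epsilon_n$.

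The main obstacle is the simultaneous constraint on the sizes $\delta_j$: the same parameter controls both how much obstruction survives at times $n\in I_j$ (we need $\delta_j\gtrsim\epsilon_n$ there, so $\delta_j$ cannot go to $0$ faster than $\epsilon_n$) and how much total measure we spend removing the forward orbits of $A_j$ (we need $\sum_j N_j\delta_j$ bounded, or better, controlled via the almost-invariance so only $\sum_j\delta_j$-type terms appear). The resolution is that the \emph{overlap} guaranteed by Lemma~\ref{AI} means $\bigcup_{k=1}^{N_j}\tau^k(A_j)$ has measure only about $(1+N_j\eta_j)\delta_j$ rather than $N_j\delta_j$; by taking $\eta_j$ as small as we like (Lemma~\ref{AI} permits any $\epsilon>0$) we make the removed set essentially $\sum_j\delta_j$ in total measure, and since $\sum_j\delta_j$ can be made finite and small while each $\delta_j$ individually stays $\gtrsim\epsilon_n$ on its block (because $\epsilon_n\downarrow 0$, the blocks can be taken long enough that the geometric-type decay of a summable sequence still dominates the slow decay of $\epsilon_n$), both requirements are met. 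I would close by remarking that, exactly as noted in the preceding discussion, taking $B_n=X\setminus\bigcup_{k=1}^n\tau^{-k}(E)$ then gives a slowly shrinking target with $m(B_n)\ge\epsilon_n$ that is a.e. invisible with respect to $\tau^{-1}$.
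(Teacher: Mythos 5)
Your construction is essentially the paper's: break time into blocks, use Lemma~\ref{AI} on each block to produce an almost invariant set $A_j$ of measure $\delta_j$ large enough to dominate $\epsilon_n$ for $n$ in that block yet with $\sum_j\delta_j<1$, and take $E$ to be a complement. The one structural difference is that you remove the forward orbit segments $\bigcup_{k=1}^{N_j}\tau^k(A_j)$, which obliges you to invoke almost invariance a second time just to verify $m(E)>0$; the paper simply sets $E=X\setminus\bigcup_j A_j$ and uses $\tau^k(X\setminus A_j)=X\setminus\tau^k(A_j)$, so that $\bigcup_{k=1}^{M}\tau^k(E)\subseteq X\setminus\bigcap_{k=1}^{M}\tau^k(A_j)$ immediately, the omitted intersection having measure $\ge\frac12 m(A_j)$ by Lemma~\ref{AI}. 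Your variant also contains a small off-by-one: writing $S_j=A_j\cap\bigcap_{l=1}^{N_j}\tau^l(A_j)$, your argument that $E\cap\tau^{-k}(S_j)=\emptyset$ needs some index $l$ with $1\le l-k\le N_j$, which exists only for $k\le N_j-1$; at $k=N_j$ the set $\tau^{-N_j}(S_j)$ is only guaranteed to lie in $A_j$ itself, which your $E$ does not exclude. This is repaired by starting the removed union at $k=0$ (i.e., also deleting $A_j$), or by absorbing the resulting $\eta_j\delta_j$ error; with that adjustment, and with the parameter bookkeeping you describe (which matches the paper's choice of $\gamma_j$ and $N_j$), the argument is correct.
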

\begin{proof} We will construct an increasing  sequence $(N_j)$ and a decreasing sequence  $(\delta_j)$ with certain properties. Let $\gamma_1 = 2\epsilon_1$ and $N_1 = 1$.  Choose $\gamma_j, j \ge 2$ decreasing to zero with $\sum\limits_{j=2}^\infty \gamma_j < 1 - 2\epsilon_1$.  So $\sum\limits_{j=1}^\infty \gamma_j < 1$.  Let $N_j, j\ge 2$ be an increasing sequence such that $2\epsilon_{N_j} \le \gamma_j$ for all $j \ge 2$.  As in Lemma~\ref{AI}, we can construct $(A_j: j \ge 1)$ such that $m(A_j) = \gamma_j$ and $m(\bigcap\limits_{k=1}^{N_{j+1}} \tau^k(A_j)\cap A_j) \ge \frac 12 m(A_j)$ for all $j \ge 1$.  Now let $E = X\backslash \bigcup\limits_{j=1}^\infty A_j$.  We have $m(E) > 0$.
Let $M \ge 1$.  There is some $j\ge 1$ such that $N_j \le M < N_{j+1}$.  So
\begin{align*}
&m(\bigcup\limits_{k=1}^M \tau^k(E))\le m(\bigcup\limits_{k=1}^M \tau^k(X\backslash A_j))\le 1 - m(\bigcap\limits_{k=1}^M \tau^k(A_j)) \\
&\le 1 - m(\bigcap\limits_{k=1}^{N_{j+1}} \tau^k(A_j))\le 1 - \frac 12 m(A_j) = 1 - \frac 12\gamma_j \le 1 - \epsilon_{N_j} \le 1 - \epsilon_M.
\end{align*}
\end{proof}

Proposition~\ref{gottheslowrate} shows that if we take $B_M = X\backslash \bigcup\limits_{k=1}^M \tau^k(E)$, then $(B_M: M\ge 1)$ is a shrinking target with $\tau^k(E)$ and $B_M$ disjoint for all $k=1,\dots,M$.  If we started this construction only for somewhat larger $n$, then this would allow us to have $m(E)$ as close to $1$ as we like and also have the values $m(B_M)$ of the shrinking target decreasing to zero as slowly as we like, but still $E$ is not visible to $(B_M)$ i.e. $\tau^M(x) \notin B_M$ for all $x\in E$.  Indeed, $\tau^k(E) \cap B_M = \emptyset$ for all $k = 1,\dots,M$.

In fact, a slight modification of the construction in Proposition~\ref{gottheslowrate} gives one of the most important results of
this section of the paper.

\begin{cor}\label{slowshrink} Fix $\tau$ ergodic, and any $(\epsilon_n: n \ge 1)$ decreasing to $0$, with $0 <\epsilon_n$ for all $n$.  There is an a.e. invisible measure-theoretic shrinking target $(B_n)$ with respect to $\tau$ such that $m(B_n) \ge \epsilon_n$ for all $n$.
\end{cor}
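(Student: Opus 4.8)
The plan is to read the target straight off Proposition~\ref{gottheslowrate}, whose output set $E$ already does the job; the only genuine work is to invoke ergodicity of $\tau$ to promote ``$E$ never meets the target at the right time'' into a full a.e. invisibility statement. First a trivial normalization: since $(\epsilon_n)$ decreases to $0$, only finitely many $\epsilon_n$ can be $\ge 1/2$, and modifying a target at finitely many indices changes neither whether it is a measure-theoretic shrinking target nor whether it is a.e. invisible (and $m(X)=1\ge\epsilon_n$ trivially at those indices); so, after setting $B_n=X$ at those indices and re-indexing, we may assume $0<\epsilon_n<1/2$ for all $n$. Now apply Proposition~\ref{gottheslowrate} to $\tau$ and $(\epsilon_n)$ to obtain $E$ with $m(E)>0$ and $m\big(\bigcup_{k=1}^n\tau^k(E)\big)\le 1-\epsilon_n$ for all $n$, and put $B_n=X\setminus\bigcup_{k=1}^n\tau^k(E)$. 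The $B_n$ are decreasing, and $m(B_n)=1-m\big(\bigcup_{k=1}^n\tau^k(E)\big)\ge\epsilon_n$ for every $n$, which is the required lower bound on the measures.

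That $(B_n)$ is actually a shrinking target is immediate from ergodicity: $\bigcup_{k=1}^\infty\tau^k(E)$ is invariant up to a null set and nonnull, hence conull, so $m(B_n)\downarrow 0$. For a.e. invisibility the key computation is $\tau^{-n}(B_n)=X\setminus\bigcup_{j=0}^{n-1}\tau^{-j}(E)$, which shows the preimages $\tau^{-n}(B_n)$ are themselves \emph{decreasing} in $n$. Therefore $\bigcup_{n\ge N}\tau^{-n}(B_n)=\tau^{-N}(B_N)$ for each $N$, and the set of $x$ with $\tau^n(x)\in B_n$ for infinitely many $n$ equals
\[
\bigcap_{N\ge 1}\tau^{-N}(B_N)=X\setminus\bigcup_{j=0}^{\infty}\tau^{-j}(E),
\]
which is null because $\bigcup_{j\ge 0}\tau^{-j}(E)$ is conull, again by ergodicity. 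Hence for a.e. $x$ one has $\tau^n(x)\in B_n$ for only finitely many $n$, i.e. $(B_n)$ is an a.e. invisible measure-theoretic shrinking target with respect to $\tau$, with $m(B_n)\ge\epsilon_n$ for all $n$.

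The single delicate point — and where a careless argument would fail — is precisely the passage from ``not a.e. visible'' to ``a.e. invisible.'' The remarks following Proposition~\ref{gottheslowrate} only supply that the set $E$, which has positive but generally not full measure, is never swept into $B_n$ at time $n$. What upgrades this to invisibility for a.e. point is the \emph{nestedness} of the preimages $\tau^{-n}(B_n)$, a structural feature peculiar to targets built as complements of growing forward $\tau$-orbits of $E$: it collapses the $\limsup$ that governs recurrence into a single decreasing intersection, which ergodicity then forces to be null. Absent this nesting one would be thrown back on a Borel--Cantelli estimate for $\sum_n m(B_n)$, which is exactly the sum one cannot control in the interesting regime $\sum_n\epsilon_n=\infty$.
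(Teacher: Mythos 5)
Your proof is correct, and it takes a genuinely different route from the paper's. The paper does not deduce the corollary from the statement of Proposition~\ref{gottheslowrate}; it re-enters that proposition's proof and replaces the fixed set $E$ by an increasing family $E_M = X\setminus\bigcup_{l=j}^\infty A_l$ (for $N_j\le M<N_{j+1}$) with $m(E_M)\uparrow 1$, checks that the covering estimate $m\bigl(\bigcup_{k=1}^M\tau^k(E_M)\bigr)\le 1-\epsilon_M$ survives this change, and then gets a.e.\ invisibility because a.e.\ $x$ lies in $E_M$ for all large $M$ and $\tau^M(E_M)\cap B_M=\emptyset$. You instead keep the single $E$ supplied by the proposition and observe the identity $\tau^{-n}(B_n)=X\setminus\bigcup_{j=0}^{n-1}\tau^{-j}(E)$, so the preimages are nested and the $\limsup$ set collapses to $X\setminus\bigcup_{j\ge 0}\tau^{-j}(E)$, which is null by ergodicity; equivalently, once the forward orbit of $x$ enters $E$ at time $j$, then $\tau^n(x)\notin B_n$ for every $n>j$, and a.e.\ orbit does enter $E$. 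This is a clean improvement in economy: you use Proposition~\ref{gottheslowrate} as a black box rather than modifying its construction, and your normalization to $\epsilon_n<1/2$ (altering finitely many indices) is legitimate since neither the shrinking property nor a.e.\ invisibility is affected by finitely many terms. What the paper's heavier construction buys is only the slightly more explicit picture of sets $E_M$ of measure tending to $1$ that are blind to the target at time $M$; for the corollary as stated your argument suffices and is the more direct one.
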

\begin{proof}  Proceed as in the proof of Proposition~\ref{gottheslowrate}.  Take $M \ge 1$ and $j$ such that $N_j\le M < N_{j+1}$.  Let $E_M = X\backslash\bigcup\limits_{l=j}^\infty A_l$.  Then it is still the case that $m(\bigcup\limits_{k=1}^M\tau^k(E_M)) \le 1 -\epsilon_M$.   But now take $B_M = X\backslash \bigcup\limits_{k=1}^M \tau^k(E_M)$.  Then $(B_M)$ is a shrinking target with
$m(B_M) \ge \epsilon_M$ for all $M$.  Moreover, $\tau^k(E_M)$ and $B_M$ are disjoint for all $k = 1,\dots,M$.  Since $m(E_M)$ increases to $1$ as $M\to \infty$, $(B_M)$ is an invisible shrinking target.
\end{proof}

\begin{remark} In Corollary~\ref{slowshrink}, we can of course arrange in particular $\sum\limits_{n=1}^\infty m(B_n) = \infty$ since the measures $m(B_n)$ can be made to shrink as slowly as we like.
\end{remark}

But now also suppose one is given a sequence $(B_n)$ as in Proposition~\ref{slowvisible}.  There is then $\sigma \in \mathcal M$, such that $\sigma (B_n) = [0,\epsilon_n]$ for all $n$.  Hence, the intervals $([0,\epsilon_n]: n\ge 1)$ are invisible with respect to $\omega = \sigma\circ\tau\circ\sigma^{-1}$, a mapping that is isomorphic to $\tau$.  This gives

\begin{cor}\label{isominvisible} Suppose $(\epsilon_n: n \ge 1)$ is decreasing and $\tau$ is ergodic.  Then there is an ergodic map $\omega$ that is isomorphic to $\tau$ such that  $([0,\epsilon_n]: n \ge 1)$ is an a.e. invisible measure-theoretic shrinking target with respect to  $\omega$.
\end{cor}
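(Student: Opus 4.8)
The plan is to deduce this from Corollary~\ref{slowshrink} by transporting an invisible target onto the intervals $[0,\epsilon_n]$ via a measure-preserving rearrangement, exactly as in Remark~a) following Theorem~\ref{MeasTypShrink}. (We may and do assume $\lim_n \epsilon_n = 0$: otherwise $([0,\epsilon_n])$ is not a shrinking target at all, and for ergodic $\omega$ it would in fact be a.e. visible by Poincar\'e recurrence.) Apply Corollary~\ref{slowshrink} to $\tau$ and $(\epsilon_n)$ to obtain a decreasing sequence of measurable sets $(B_n)$ which is an a.e. invisible measure-theoretic shrinking target with respect to $\tau$ and has $m(B_n)\ge\epsilon_n$ for all $n$. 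Since $(B_n)$ is decreasing, so is the sequence of numbers $m(B_n)$, and hence $([0,m(B_n)]:n\ge 1)$ is a decreasing sequence of intervals with $m([0,m(B_n)])=m(B_n)$.

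First I would construct $\sigma\in\mathcal M$ with $\sigma(B_n)=[0,m(B_n)]$ for every $n$. This is the rearrangement of Remark~a): put $B_\infty=\bigcap_n B_n$ and pick an invertible measure-preserving map of $B_\infty$ onto $[0,m(B_\infty)]$; for each $n\ge 1$ pick an invertible measure-preserving map of the ``annulus'' $B_n\setminus B_{n+1}$ onto the interval $[m(B_{n+1}),m(B_n)]$ (these have the same length); and map $X\setminus B_1$ onto $[m(B_1),1]$. Glueing the pieces gives an invertible measure-preserving $\sigma$ on $X$, and one checks $\sigma(B_n)=[0,m(B_\infty)]\cup\bigcup_{k\ge n}[m(B_{k+1}),m(B_k)]=[0,m(B_n)]$. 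Now set $\omega=\sigma\circ\tau\circ\sigma^{-1}$; then $\sigma$ is a measure-space isomorphism conjugating $\tau$ to $\omega$, so $\omega$ is ergodic and isomorphic to $\tau$. The invisibility transfers: for a.e. $x$ there is $N(x)$ with $\tau^n(x)\notin B_n$ for all $n\ge N(x)$; writing $y=\sigma(x)$, which ranges over a set of full measure as $x$ does, we have $\omega^n(y)=\sigma(\tau^n(x))$, and since $\sigma$ is a.e. bijective, $\omega^n(y)\in[0,m(B_n)]=\sigma(B_n)$ if and only if $\tau^n(x)\in B_n$. Hence $\omega^n(y)\notin[0,m(B_n)]$ for all $n\ge N(x)$, so $([0,m(B_n)])$ is an a.e. invisible measure-theoretic shrinking target for $\omega$.

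Finally, since $\epsilon_n\le m(B_n)$ we have $[0,\epsilon_n]\subseteq[0,m(B_n)]$ for every $n$, so $\omega^n(y)\notin[0,m(B_n)]$ forces $\omega^n(y)\notin[0,\epsilon_n]$ for all $n\ge N(x)$; and $([0,\epsilon_n])$ is a decreasing sequence of intervals with $m([0,\epsilon_n])=\epsilon_n\to 0$, hence a genuine measure-theoretic shrinking target. Thus $([0,\epsilon_n])$ is a.e. invisible with respect to $\omega$, as required. The only point needing real care is the bookkeeping of the simultaneous rearrangement in the middle step — checking that the countably many pieces glue to an honest element of $\mathcal M$ and that $\sigma$ carries each $B_n$ exactly onto $[0,m(B_n)]$ — together with keeping the various ``almost every'' clauses aligned through the conjugacy, which works because measure isomorphisms preserve null sets; everything else is formal.
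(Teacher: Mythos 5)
Your proposal is correct and follows essentially the same route as the paper: the paper derives this corollary in one line by taking the invisible target $(B_n)$ from Corollary~\ref{slowshrink}, producing $\sigma\in\mathcal M$ carrying the $B_n$ onto initial intervals (the rearrangement of Remark a)), and conjugating to $\omega=\sigma\circ\tau\circ\sigma^{-1}$. Your version is in fact slightly more careful than the paper's, which asserts $\sigma(B_n)=[0,\epsilon_n]$ even though Corollary~\ref{slowshrink} only gives $m(B_n)\ge\epsilon_n$; your fix of mapping $B_n$ onto $[0,m(B_n)]$ and then using $[0,\epsilon_n]\subseteq[0,m(B_n)]$ closes that small gap.
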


On the other hand, if we avoid the obvious obstruction on the $m(B_n)$, we can also take a fixed $\tau$ and construct a.e. visible measure-theoretic shrinking targets with any desired rate of decreasing measure.  For the proof of this, it is useful to first prove this lemma.

\begin{lemma}\label{helpAI} Consider an ergodic map $\tau$ and $\gamma_l, 1 \le l \le L$,
with $0 < \gamma_L <\dots < \gamma_1 <1$.  For any $\eta > 0$, there are decreasing sets $U_l$ with $m(U_l) = \gamma_l$ for $1\le l \le L$, such that the sets $\tau^{l-1}(U_l), 1\le l \le L$ are mutually independent with overall error at most $\eta$.
\end{lemma}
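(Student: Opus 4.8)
The plan is to build the sets $U_L \supset U_{L-1} \supset \cdots \supset U_1$ by downward induction on $l$, using Lemma~\ref{AI} at each stage to make the newly-added ``annulus'' $U_l \setminus U_{l+1}$ behave independently under the relevant power of $\tau$. The target notion of independence is: for any choice of signs, the measure of $\bigcap_{l=1}^L E_l$, where $E_l$ is either $\tau^{l-1}(U_l)$ or its complement, differs from $\prod_{l=1}^L m(\tau^{l-1}(U_l))^{\pm}$ by at most $\eta$ in total (summed, or maximized, over the $2^L$ choices — we fix one convention and distribute $\eta$ among the $L$ steps, so each step is allowed error roughly $\eta/L$, possibly weighted). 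First I would record the trivial base case: $U_L$ is any set of measure $\gamma_L$, and $\tau^{L-1}(U_L)$ is trivially ``independent'' of the empty collection.

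The inductive step is the heart of the argument. Suppose $U_L, \ldots, U_{l+1}$ have been chosen so that $\tau^{l}(U_{l+1}), \ldots, \tau^{L-1}(U_L)$ are mutually independent up to the accumulated error. I want to choose $U_l$ with $U_{l+1} \subseteq U_l$, $m(U_l) = \gamma_l$, so that $\tau^{l-1}(U_l)$ is nearly independent of the previously placed sets. Write $U_l = U_{l+1} \cup V_l$ with $V_l$ disjoint from $U_{l+1}$ and $m(V_l) = \gamma_l - \gamma_{l+1} > 0$. I would like to choose $V_l$ so that $\tau^{l-1}(V_l)$ is spread ``uniformly'' across the atoms of the finite algebra generated by $\tau^{l}(U_{l+1}), \ldots, \tau^{L-1}(U_L)$ together with $\tau^{l-1}(U_{l+1})$; that is, $m(\tau^{l-1}(V_l) \cap P) \approx m(\tau^{l-1}(V_l)) \cdot m(P)/m(X)$ for each atom $P$. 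This is exactly the kind of statement one gets from a Rokhlin-tower / approximate-independence argument, and Lemma~\ref{AI} is the tool: apply it (or a routine variant of it, slicing a Rokhlin tower for $\tau$ of height large compared to $L$ and selecting appropriate level sets, intersected with pullbacks of the atoms $P$) to manufacture a set of the prescribed measure whose $\tau^{l-1}$-image meets each atom in nearly the correct proportion, with error controlled by $\eta/L$ (or a suitable fraction thereof). Since there are only finitely many atoms, and finitely many steps, the errors add up to at most $\eta$.

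A couple of bookkeeping points I would attend to. First, one must check that the ``annulus'' $V_l$ can be taken disjoint from $U_{l+1}$ while still having its $\tau^{l-1}$-image equidistributed over the atoms — this is fine because $m(U_{l+1})$ is bounded away from $1$ and the Rokhlin construction has plenty of room; one simply removes the (small, and itself nearly-equidistributed) overlap and reapportions, folding the resulting discrepancy into the error budget. Second, one should verify that the near-independence of the $\tau^{l-1}(U_l)$'s, which is a statement about $U_l = U_{l+1} \cup V_l$, follows from the near-independence of $U_{l+1}$ (inductive hypothesis) together with the equidistribution of $\tau^{l-1}(V_l)$: expand $m(\tau^{l-1}(U_l) \cap (\text{atom}))$ as a sum over the $U_{l+1}$-part and the $V_l$-part and use both facts. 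The main obstacle, and the only place real care is needed, is making the approximate-independence estimate from Lemma~\ref{AI} interact cleanly with \emph{all} of the previously placed sets simultaneously rather than just one — i.e. getting equidistribution over the full finite algebra, not merely over a single set $A$ as in Lemma~\ref{AI}. This is handled by the standard device of refining the Rokhlin tower finely enough that each level is essentially constant on the atoms of that algebra, but it is the step that must be written carefully; everything else is routine.
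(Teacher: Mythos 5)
Your inductive scheme has a genuine gap at the inductive step, and the ``trivial base case'' is where the seed of the failure is planted. When you pass from $U_{l+1}$ to $U_l=U_{l+1}\cup V_l$, the nesting forces $\tau^{l-1}(U_l)\supseteq \tau^{l-1}(U_{l+1})$, so for every atom $P$ of the algebra generated by $\tau^{l}(U_{l+1}),\dots,\tau^{L-1}(U_L)$ you have the lower bound $m(\tau^{l-1}(U_l)\cap P)\ge m(\tau^{l-1}(U_{l+1})\cap P)$ no matter how the annulus $V_l$ is chosen: $V_l$ can only add mass to an atom, never remove it. Your inductive hypothesis controls the joint distribution of $\tau^{l}(U_{l+1}),\dots,\tau^{L-1}(U_L)$, but says nothing about $\tau^{l-1}(U_{l+1})$, which is a \emph{different} power applied to $U_{l+1}$. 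If $m(\tau^{l-1}(U_{l+1})\cap P)>\gamma_l\,m(P)$ for some atom $P$, the target value is unreachable. This really happens: take $L=2$, $\gamma_1=0.9$, $\gamma_2=0.5$, and for the base case choose $U_2$ almost invariant under $\tau$ --- Lemma~\ref{AI} supplies exactly such a set --- so that $m(U_2\cap\tau(U_2))\approx 0.5$. Then every $U_1\supseteq U_2$ has $m(U_1\cap\tau(U_2))\ge m(U_2\cap \tau(U_2))\approx 0.5$, whereas near-independence demands a value near $0.45$. So the base case is not free, and each $U_k$ must be chosen with foresight about \emph{all} the powers $\tau^{j}(U_k)$, $0\le j\le L-1$, that will interact with later choices --- a much stronger property than the one your induction carries, and it is not clear the strengthened induction closes. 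Relatedly, Lemma~\ref{AI} is the wrong tool for the equidistribution you want: it manufactures \emph{almost invariant} sets, i.e.\ sets maximally correlated with their images under powers of $\tau$, which is the opposite property (and is precisely what builds the counterexample above).

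The paper avoids the one-set-at-a-time induction entirely. It takes a Bernoulli shift $\sigma$ as a template, in which the nested cylinder sets $W_l$ over the base coordinate (of measure $\gamma_l$) have \emph{exactly} independent images $\sigma^{l-1}(W_l)$, and then uses weak approximation of $\sigma$ by conjugates $\omega\tau\omega^{-1}$ of $\tau$; setting $U_l=\omega^{-1}(W_l)$ transfers the exact independence to near-independence with error at most $\eta$. If you want an intrinsic construction on $\tau$ itself, the viable route is to build all $L$ sets simultaneously inside one tall Rokhlin tower, assigning the levels to the $L$ events according to an i.i.d.\ pattern --- in effect reproducing the Bernoulli template by hand rather than inducting on $l$.
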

\begin{proof} As usual we denote $E^1 = E$ and $E^c = X\backslash E$.  We want to construct nested sets $(U_l)$ so that for
any choice of $e_l \in \{1,c\}, 1 \le l \le L$, we have
\[\left |m(\bigcap\limits_{l=1}^L \tau^{l-1}(U_l^{e_l})) - \prod\limits_{l=1}^L m(U_l^{e_l})\right| \le \eta.\]
To achieve this, we take the template given by some Bernoulli map $\sigma$ realized as the coordinate shift on $\prod\limits_{-\infty}^\infty [0,1]$ in the product probability measure with coordinate probability measure $p$ being Lebesgue measure on $[0,1]$.  We take sets $W_l$ defined by restricting the base coordinate to $[0,\gamma_l]$ and leaving all other coordinates unrestricted in $[0,1]$.
The powers $\sigma^{l-1}(W_l)$ are independent sets.  Now select some measure-preserving map $\omega$ so that $\omega\circ \tau\circ \omega^{-1}$ is extremely close to $\sigma$ in the weak topology.   Then $\omega(\tau^{l-1}(\omega^{-1}(W_l)))$ is very close in the weak topology to $\omega(\sigma^{l-1}W_l)$ simultaneously for all $l$.  Let $U_l = \omega^{-1}(W_l)$.  These sets are nested with the desired measures.  Also, the independence of $\sigma^{l-1}(W_l),1\le l \le L$ gives us the near independence of
$\omega(\tau^{l-1}(U_l)),1\le l \le L$, and hence of course the same degree of near independence of $\tau^{l-1}(U_l),1\le l \le L$.
\end{proof}

\begin{remark}\label{adjustment}  Lemma~\ref{helpAI} can be adapted to giving a similar conclusion scaled into a tall Rokhlin tower on $\tau$.  The simplest method for this is to take the tower and map the top of the tower to the bottom so that the new mapping is ergodic.
\end{remark}

Here now is a second important result of this section of the paper.

 \begin{prop}\label{slowvisible} Suppose $(\epsilon_n:n\ge 1)$ is decreasing and $\sum\limits_{n=1}^\infty \epsilon_n = \infty$.  Suppose $\tau$ is ergodic.  Then there is an a.e. visible shrinking target $(B_n)$ such that $m(B_n) = \epsilon_n$ for all $n$.
\end{prop}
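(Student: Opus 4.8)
The plan is to build $(B_n)$ by a block construction: inside each block I use Lemma~\ref{helpAI} to make the orbit pullbacks $\tau^{-n}(B_n)$ nearly independent, and then I extract a.e.\ visibility from a soft ergodicity argument rather than from a quantitative Borel--Cantelli estimate.

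First I would reduce the problem to producing a \emph{decreasing} sequence $(B_n)$ with $m(B_n)=\epsilon_n$ whose visible set is merely non-null. Set $Z=\bigcap_{N\ge1}\bigcup_{n\ge N}\tau^{-n}(B_n)=\limsup_n\tau^{-n}(B_n)$. If the $B_n$ decrease, then $\tau^{-1}(Z)=\limsup_m\tau^{-m}(B_{m-1})\supseteq\limsup_m\tau^{-m}(B_m)=Z$, so $\tau(Z)\subseteq Z$; since $\tau$ is invertible and measure preserving this forces $m(Z\triangle\tau Z)=0$, hence $Z$ is invariant mod $m$ and, by ergodicity, $m(Z)\in\{0,1\}$. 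Thus it suffices to arrange $m(Z)>0$, for then $m(Z)=1$, i.e.\ $\tau^n(x)\in B_n$ infinitely often for a.e.\ $x$; and as $\epsilon_n\downarrow0$ the sequence $(B_n)$ is then a genuine shrinking target. (We may assume $0<\epsilon_n<1$ for all $n$, the other values contributing only finitely many sets that we take equal to $X$.)

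To get $m(Z)>0$ I would split $\mathbb N$ into consecutive blocks $I_j=\{a_j,\dots,b_j\}$ with $\sum_{n\in I_j}\epsilon_n\ge1$, which is possible since $\sum_n\epsilon_n=\infty$. Fix $j$ and apply Lemma~\ref{helpAI} to the ergodic map $\tau^{-1}$ with $\gamma_l=\epsilon_{a_j+l-1}$ for $1\le l\le b_j-a_j+1$ (strictly decreasing after a harmless perturbation to break ties): this yields nested sets $B_{a_j}\supseteq\cdots\supseteq B_{b_j}$ with $m(B_n)=\epsilon_n$ such that the sets $\tau^{-(n-a_j)}(B_n)$, $n\in I_j$, are mutually independent up to a preassigned error $\eta_j$. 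Applying the measure-preserving bijection $\tau^{-a_j}$ to this family shows the orbit events $A_n:=\tau^{-n}(B_n)$, $n\in I_j$, are mutually independent up to error $\eta_j$ as well, so that
\[m\Big(\bigcap_{n\in I_j}A_n^{\,c}\Big)\le\prod_{n\in I_j}(1-\epsilon_n)+\eta_j\le e^{-\sum_{n\in I_j}\epsilon_n}+\eta_j\le e^{-1}+\eta_j.\]
Choosing $\eta_j$ small enough that $e^{-1}+\eta_j<\tfrac12$ gives $m(H_j)\ge\tfrac12$ for $H_j:=\bigcup_{n\in I_j}A_n$. Since the $I_j$ partition $\mathbb N$ with $\max I_j\to\infty$, we have $\limsup_j H_j\subseteq\limsup_n A_n=Z$, and as $m$ is finite, $m(Z)\ge m(\limsup_j H_j)\ge\limsup_j m(H_j)\ge\tfrac12>0$. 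By the reduction above this finishes the proof.

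The one genuinely delicate point --- the main obstacle --- is to make the blockwise constructions fit together into a single \emph{globally} decreasing sequence: the sets produced by Lemma~\ref{helpAI} on block $I_j$ must all be contained in $B_{b_{j-1}}$, the smallest set from block $I_{j-1}$ (this is consistent measure-theoretically, since $\epsilon_{a_j}\le\epsilon_{b_{j-1}}$). This is exactly what Remark~\ref{adjustment} is designed to handle: I would carry out the construction inductively inside a refining sequence of tall Rokhlin towers for $\tau$, performing the block-$I_j$ construction in the stage-$j$ tower below the level representing $B_{b_{j-1}}$, so that the new nested sets lie under $B_{b_{j-1}}$ while the near-independence of $\{\tau^{-n}(B_n):n\in I_j\}$ survives inside the tower, the off-tower discrepancy being absorbed into $\eta_j$. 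Verifying that the tower-relative form of Lemma~\ref{helpAI} still delivers the required blockwise near-independence once this containment is imposed is the crux; everything else in the argument is soft.
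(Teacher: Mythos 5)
Your proposal is correct in outline and, for the hard part, follows the same route as the paper: blocks $I_j$, Lemma~\ref{helpAI} applied to $\tau^{-1}$ to make the events $\tau^{-n}(B_n)$, $n\in I_j$, nearly independent, and a Kakutani/Rokhlin tower device to force the blockwise-constructed sets to nest globally. The genuine difference is your reduction at the front end. The paper takes block sums $\sum_{n\in I_j}\epsilon_n\ge j$ with errors summable in $j$, so that $\sum_j m\bigl(\bigcap_{n\in I_j}(X\setminus\tau^{-n}(B_n))\bigr)<\infty$ and the easy Borel--Cantelli lemma gives full measure directly. You instead observe that for a decreasing target the set $Z=\limsup_n\tau^{-n}(B_n)$ satisfies $Z\subseteq\tau^{-1}(Z)$, hence is invariant mod $m$ by measure preservation and has measure $0$ or $1$ by ergodicity; this lets you get away with $\sum_{n\in I_j}\epsilon_n\ge 1$ and a uniform bound $m(H_j)\ge\tfrac12$, finishing with reverse Fatou. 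That zero--one reduction is sound and is a real simplification of the bookkeeping. What it does not simplify is the crux you correctly flag: producing, for each $j$, nested sets of the exact measures $\epsilon_n$ that are both nearly independent along the orbit and all contained in $B_{b_{j-1}}$. The paper's mechanism here is worth internalizing: the block-$j$ sets are taken inside the union of columns $\bigcup_{k\ge s_j}R_k$ of a skyscraper, and each is defined as $B_n^1\cup B_n^2$ where $B_n^2$ is the \emph{fixed} tail $\bigcup_{k>t_j}R_k$ of tiny measure $\delta$; containing this common tail is what forces $B_{a_{j+1}}\subseteq B_{b_j}$, and because $\delta$ is small the near-independence achieved for the $B_n^1$ via Lemma~\ref{helpAI} (in the scaled form of Remark~\ref{adjustment}) is perturbed only negligibly. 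Your plan gestures at exactly this but leaves it unverified, so as written it is a correct strategy with the key construction still owed rather than a complete proof. One small slip: perturbing the $\gamma_l$ ``to break ties'' would destroy the requirement $m(B_n)=\epsilon_n$ exactly; instead note that Lemma~\ref{helpAI} holds verbatim for weakly decreasing $\gamma_l$ (take $W_l=W_{l+1}$ in its Bernoulli template when $\gamma_l=\gamma_{l+1}$), so no perturbation is needed.
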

\begin{proof}  Consider pairwise disjoint blocks of terms  $I_j$ in $\mathbb N$, chosen so that $\Sigma_j = \sum\limits_{n\in I_j} m(B_n) \ge j$ for all $j\ge 1$.   Then by the standard argument used in the strong direction of the Borel-Cantelli Lemma, if the $\tau^{-n}(B_n)$ are independent on the blocks $I_j$, then $m(\bigcap\limits_{n\in I_j} X\backslash \tau^{-n}(B_n)) \le e^{-j}$ for all $j$.  Hence, since $\sum\limits_{j=1}^\infty e^{-j} < \infty$,  a.e. $x$ is only in finitely many of these intersections.  That is, for a.e. $x$, if $j$ is large enough, we have $\tau^{n}(x) \in B_n$ for some $n \in I_j$.  It follows that $(B_n)$ is a.e. visible with respect to $\tau$.

We actually construct the sets $(B_n)$ only so that $\tau^{-n}(B_n)$ are approximately independent on the blocks $I_j$, but with a good enough approximation so that the same conclusion as above can be made.  Also, the blocks $(B_n,n \in I_j)$ will be constructed inductively.  We arrange that the sets $(B_n:n\in I_j)$ are decreasing and $m(B_n) = \epsilon_n$ for $n\in I_j$.  But the sets $(B_n)$ for indices not in a block $I_j$ are not explicitly determined; these are just chosen so that overall the sequence $(B_n)$ is shrinking and $m(B_n) = \epsilon_n$ for all $n\ge 1$.  For this insertion of additional  sets $B_n$ to work, we will also have to arrange that if $I_j = [a_j,b_j]$, then $B_{a_{j+1}} \subset B_{b_j}$ for all $j$.

First, take a Kakutani skyscraper for $\tau^{-1}$ built on a base $B$ with $m(B) \in (0,1)$.  Assume for simplicity that $m(\bigcup\limits_{k=1}^n \tau^{-k}(B)) < 1$ for all $n$.  It is not necessary that $B$ have small measure.  It is important only that by ergodicity the skyscraper has arbitrarily high levels of first time return to $B$ so that our inductive choice of the blocks $(B_n: n \in I_J)$ can proceed.   At each stage of the construction, we will choose the sets $(B_n: n \in I_J)$ with the desired approximate independence and such that the appropriate nesting occurs.

To be specific, let $D_k \subset B$ be the set of $x \in D_k$ such that the smallest value of $n$ such that $\tau^{-n}(x) \in B$ is $n= k$.  Let $R_k = \bigcup\limits_{n=1}^{k-1} \tau^{-n}(D_k)$.  Then $X$ is partitioned by the sets $R_k$ and the union of them is the Kakutani skyscraper.

We will be selecting the sets $B_n$ for $n\in I_j$ with the following properties.  There will be increasing sequences $(s_j)$ and $(t_j)$ such that $s_j < t_j < s_{j+1}$ for all $j$.  The sets $B_n$ for $n \in I_j$ will be chosen so that they are subsets of $\bigcup\limits_{k=s_j}^\infty R_k$ while also each such $B_n$ will contain the entire tail $\bigcup\limits_{k=t_j+1}^\infty R_k$.  This is what guarantees the nesting property needed between the blocks i.e. $B_{a_{j+1}} \subset B_{b_j}$ for all $j$.

We need to see how to get the approximate independence of the sets $\tau^{-n} (B_n),n \in I_j$ under these constraints.  First, we construct $B_n^1,n\in I_j$ each to be a subset of $\bigcup\limits_{k=s_j}^{t_j} R_k$.  We also choose them
nested with $m(B_n^1) = \epsilon_n -\delta$ for all $n\in I_j$.  Here $\delta$ will be chosen appropriately small enough. We also let $B_n^2,n\in I_j,$ each be the entire tail $\bigcup\limits_{k=t_j+1}^\infty R_k$.  By inductively adjusting the choices of $s_j$ and $t_j$, we can guarantee that the choice of $m(B_n^2) = \delta$ for all $n\in I_j$ is small enough so that any approximate independence we have achieved with $\tau^{-n}(B_n^1), n \in I_j,$ will be perturbed very little when we replace $B_n^1$ by $B_n^1 \cup B_n^2$ for all $n\in I_j$.
Also, the choices of $(s_j,t_j)$ and $(a_j,b_j)$ will be large enough, so that we have the block sums $\sum\limits_{n\in I_j} \epsilon_n \ge j$.

Assume that the first $J$ blocks $(B_n:n \in I_j), j=1,\dots,J,$ and the first $J$ pairs $(s_j,t_j), j =1,\dots,J,$  have been constructed with the guidelines above.  We now choose $s_{j+1} > t_j$ and then $t_{j+1} > s_{j+1}$ so that $m(\bigcup_{k=t_{j+1}+1}^\infty R_k) = \delta$ is extremely small relative to the $m(\bigcup\limits_{k=s_{j+1}}^{t_{j+1}} R_k)$.  We also arrange that $I_{j+1}$ has $a_{j+1}$ large enough so that $\epsilon_n, n \in I_{j+1}$ are all much less than $m(\bigcup\limits_{k=s_{j+1}}^{t_{j+1}} R_k)$.  That is,  $\epsilon_{b_{j+1}}$ is much less than $m(\bigcup\limits_{k=s_{j+1}}^{t_{j+1}} R_k)$.  We have no obstacle to increasing the heights of the towers $R_k$ by increasing the choice of $s_{j+1}$.  Therefore, using Lemma~\ref{helpAI} (see also Remark~\ref{adjustment}) scaled appropriately and applied to the map $\tau^{-1}$ on the towers $R_k$, this allows us to create measure-theoretically decreasing sets  $B_n^1,n\in I_j$, with $m(B_n^1) = \epsilon_n -\delta$, as subsets of $\bigcup\limits_{k=s_{j+1}}^{t_{j+1}} R_k$ in such a fashion that $\tau^{-n}(B_n), n \in I_{j+1},$ are extremely close to being mutually independent.    An inspection of the process above shows, that with appropriate choices of $I_{j+1}$ and $(s_{j+1},t_{j+1})$, all of the desired properties of $B_n, n\in I_{j+1}$ can be achieved simultaneously.
\end{proof}

Suppose one is given a sequence $(B_n)$ as in Proposition~\ref{slowvisible}.  There is then $\sigma \in \mathcal M$, such that $\sigma (B_n) = [0,\epsilon_n]$ for all $n$.  Hence, the intervals $([0,\epsilon_n]: n\ge 1)$ are a.e. visible with respect to $\omega = \sigma\circ \tau\circ \sigma^{-1}$, a mapping that is isomorphic to $\tau$.

\begin{cor}\label{isomaevisible}  Suppose $(\epsilon_n:n\ge 1)$ is decreasing and $\sum\limits_{n=1}^\infty \epsilon_n = \infty$.  Suppose $\tau$ is ergodic.  Then there is an ergodic map $\omega$ that is isomorphic to $\tau$, such that  $([0,\epsilon_n]: n\ge 1)$ is an a.e. visible measure-theoretic shrinking target with respect to $\omega$.
\end{cor}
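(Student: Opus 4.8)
The plan is to derive this corollary directly from Proposition~\ref{slowvisible} by conjugation, exactly along the lines of the paragraph immediately preceding the statement. First I would invoke Proposition~\ref{slowvisible} to obtain, for the given decreasing $(\epsilon_n)$ with $\sum_n \epsilon_n = \infty$ and the given ergodic $\tau$, an a.e. visible measure-theoretic shrinking target $(B_n)$ with $m(B_n) = \epsilon_n$ for all $n$, and with $(B_n)$ decreasing.

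Next I would produce the conjugating map. Since $(B_n)$ and the intervals $([0,\epsilon_n])$ are both decreasing sequences of measurable sets with $m(B_n) = \epsilon_n = m([0,\epsilon_n])$ for every $n$, the construction in the remark following Theorem~\ref{MeasTypShrink} applies: one builds invertible measure-preserving maps matching $B_n \setminus B_{n+1}$ with $[0,\epsilon_n]\setminus[0,\epsilon_{n+1}] = (\epsilon_{n+1},\epsilon_n]$ (both of measure $\epsilon_n - \epsilon_{n+1}$) for each $n \ge 0$, together with a map matching the tails $\bigcap_n B_n$ and $\bigcap_n [0,\epsilon_n] = [0,\lim_n \epsilon_n]$ (both of the same measure), and glues them into a single $\sigma \in \mathcal M$ with $\sigma(B_n) = [0,\epsilon_n]$ for all $n$.

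Then set $\omega = \sigma \circ \tau \circ \sigma^{-1}$. This is measure-preserving and isomorphic to $\tau$ via $\sigma$, hence ergodic. The final step is to check that a.e. visibility is a conjugacy invariant for targets carried by $\sigma$: for any $x$ and any $n$ one has $\omega^n(\sigma(x)) = \sigma(\tau^n(x))$, so $\omega^n(\sigma(x)) \in [0,\epsilon_n] = \sigma(B_n)$ if and only if $\tau^n(x) \in B_n$. Thus $\omega^n(y) \in [0,\epsilon_n]$ infinitely often for a.e. $y$ (write $y = \sigma(x)$ and use that $\sigma$ preserves null sets) precisely because $\tau^n(x) \in B_n$ infinitely often for a.e. $x$. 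Hence $([0,\epsilon_n]:n\ge1)$ is an a.e. visible measure-theoretic shrinking target with respect to $\omega$.

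There is no real obstacle here; the content is entirely in Proposition~\ref{slowvisible}. The only points requiring a modicum of care are that the target-matching construction of $\sigma$ genuinely needs both sequences to be decreasing with matching level-measures (which holds), and that one records explicitly that conjugation by a measure-preserving bijection transports the ``$\tau^n(x)\in B_n$ infinitely often for a.e. $x$'' statement to the corresponding statement for $\omega$, which is immediate from $\omega^n\circ\sigma = \sigma\circ\tau^n$ and the fact that $\sigma$ maps full-measure sets to full-measure sets.
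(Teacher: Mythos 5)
Your proposal is correct and follows essentially the same route as the paper: the paper derives the corollary in the paragraph immediately preceding it by applying Proposition~\ref{slowvisible}, building $\sigma\in\mathcal M$ with $\sigma(B_n)=[0,\epsilon_n]$ via the level-by-level matching described in the remark after Theorem~\ref{MeasTypShrink}, and conjugating to $\omega=\sigma\circ\tau\circ\sigma^{-1}$. Your explicit verification that $\omega^n\circ\sigma=\sigma\circ\tau^n$ transports a.e.\ visibility is exactly the (implicit) content of the paper's argument.
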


\begin{remark} Corollary~\ref{isominvisible} and Corollary~\ref{isomaevisible} give interesting contrasts and show how at least visibility of  measure-theoretic shrinking targets is about the nature of the map with respect to the shrinking target, and not something intrinsic in the shrinking target by itself.
\end{remark}

\noindent {\bf Acknowledgments}: We thank J. Chaika for very useful conversations about shrinking targets and IETs.  We thank D. Kleinbock for pointing out a number of important references that we had not originally included in a literature review for this article.  We also thank M. Wierdl and A. Parrish for suggestions on how to make the construction in Proposition~\ref{slowvisible} work by using the Kakutani skyscraper construction.

While this article was in the final stages of preparation for publication, we learned the sad news that Misha Boshernitzan passed away.  We would like to dedicate this article to him, in memory of all the wonderful mathematics that he created and the encouragement that he gave to others to do the same.

\end{document}